\pgfplotsset{compat=1.17}
\newtheorem{teo}{Theorem}
\newtheorem{lem}[teo]{Lemma}
\newtheorem{prop}[teo]{Proposition}
\newtheorem{cor}[teo]{Corollary}
\theoremstyle{definition}
\newtheorem{defn}[teo]{Definition}
\newtheorem{ex}[]{Example}
\theoremstyle{remark}
\newtheorem{oss}[teo]{Remark}
\newtheorem*{oss*}{Remark}
\newcommand{\N}{\mathbb N}
\newcommand{\R}{\mathbb R}
\newcommand{\C}{\mathbb C}
\newcommand{\T}{\mathbb T}
\newcommand{\cH}{{\mathcal H}}
\newcommand{\cW}{{\mathcal W}}
\newcommand{\email}[1]{\protect\href{mailto:#1}{#1}}
\newcommand{\D}{{\rm d}}
\newcommand{\E}{{\rm e}}
\definecolor{eggplant}{HTML}{54414E}
\definecolor{terracotta}{HTML}{E07A5F}
\definecolor{queenblue}{HTML}{4D7298}
\definecolor{greensheen}{HTML}{81B29A}
\definecolor{deepchampagne}{HTML}{F2CC8F}
\definecolor{darkpurple}{HTML}{6A0DAD}
\begin{document}
\selectlanguage{english}
\title{Phase retrieval of bandlimited functions for the wavelet transform}
\date{} 

\author{Rima Alaifari\thanks{Seminar for Applied Mathematics, ETH Z{\"u}rich, R{\"a}mistrasse 101, 8092 Z{\"u}rich, Switzerland (\email{rima.alaifari@sam.math.ethz.ch}).}
\and Francesca Bartolucci\thanks{Seminar for Applied Mathematics, ETH Z{\"u}rich, R{\"a}mistrasse 101, 8092 Z{\"u}rich, Switzerland (\email{francesca.bartolucci@sam.math.ethz.ch}).)
}
\and Matthias Wellershoff\thanks{Seminar for Applied Mathematics, ETH Z{\"u}rich, R{\"a}mistrasse 101, 8092 Z{\"u}rich, Switzerland (\email{matthias.wellershoff@sam.math.ethz.ch}).}}

\maketitle

\normalem

\abstract{We study the recovery of square-integrable signals from the absolute values of their wavelet transforms, also called \emph{wavelet phase retrieval}. We present a new uniqueness result for wavelet phase retrieval. To be precise, we show that any wavelet with finitely many vanishing moments allows for the unique recovery of real-valued bandlimited signals up to global sign. Additionally, we present the first uniqueness result for \emph{sampled} wavelet phase retrieval in which the underlying wavelets are allowed to be complex-valued and we present a uniqueness result for phase retrieval from sampled Cauchy wavelet transform measurements.}

\smallskip
\noindent \textit{Key words.} Phase retrieval, Wavelet transform, Paley--Wiener space, Wavelet system, Sampling theorem

\section{Introduction}

In the present paper, we study the \emph{(continuous) wavelet transform} of signals $f \in L^2(\R)$ associated to a wavelet $\psi \in L^1(\R)$ which is defined by 
\begin{equation*}
    \mathcal{W}_{\psi}f(b,a) := \frac{1}{a} \int_{\R} f(x) \overline{\psi\left(\frac{x-b}{a}\right)} \,\mathrm{d} x, \qquad a \in \R_+,~b \in \R. 
\end{equation*}
In particular, we are interested in the recovery of $f$ from the magnitude-only measurements $\lvert \mathcal{W}_\psi f \rvert$. This problem is typically called \emph{wavelet phase retrieval} and has recently received an increasing amount of attention \cite{alaifari2017reconstructing,holighaus2019characterization,jaming2014uniqueness,mallat2015phase,waldspurger17}. Wavelet phase retrieval can be used in audio analysis and processing. In particular, it allows the experimenter to freely modify the scalogram (a term used to refer to the absolute value of the wavelet transform) of an audio signal and then synthesise the modified scalogram to obtain a modified audio signal. This technique can be applied in blind source seperation and audio texture synthesis for example \cite{bruna2013audio,virtanen2007monaural,waldspurger17}.

It is important to note from the outset that the signals $f$ and $\tau f$, where $\tau \in \mathbb{T}$, generate the same measurements $\lvert \mathcal{W}_\psi f \rvert = \lvert \mathcal{W}_\psi (\tau f) \rvert$ and can thus not be distinguished from wavelet magnitudes only. For this reason, it is customary to ask whether a given signal $f \in L^2(\R)$ can be recovered \emph{up to global phase} from phaseless measurements, i.e.~whether one may recover $f$ up to equivalence defined by the relation
\begin{equation*}
    f \sim g ~ : \Leftrightarrow ~ \exists \, \tau \in \mathbb{T}: g = \tau f.
\end{equation*}
Stated concisely, our aim is therefore to study the injectivity of the \emph{phase retrieval operator} $\mathcal{A}_\psi : \mathcal{M} / \sim \, \to [0,+\infty)^\Lambda$ given by 
\begin{equation*}
    \mathcal{A}_\psi(f)(b,a) := \lvert \mathcal{W}_\psi f (b,a) \rvert, \qquad (b,a) \in \Lambda,
\end{equation*}
where $\Lambda \subseteq \R \times \R_+$ and $\mathcal{M} \subseteq L^2(\R)$.

Determining for which wavelets $\psi \in L^1(\R)$ and which choices of $\Lambda \subseteq \R \times \R_+$ as well as $\mathcal{M} \subseteq L^2(\R)$ the operator $\mathcal{A}_{\psi}$ is injective is a famously difficult problem that has been solved only in very few cases \cite{alaifari2017reconstructing,jaming2014uniqueness,mallat2015phase}. We also refer to \cite{waldspurger17} where the author gives an overview of the wavelet phase retrieval problem and states that \emph{``(...) a theoretical study of the well-posedness, for relatively general wavelets, seems out of reach''}. 

\paragraph{Prior work} Let us highlight some prior work which is related to the present paper: Firstly, we want to mention \cite{alaifari2017reconstructing} in which the authors study a \emph{wavelet sign retrieval} problem. In particular, they consider a setup in which both the signal and the wavelet are assumed to be real-valued such that the wavelet coefficients are real-valued as well. In this case, the problem of recovering $f$ from $\lvert \mathcal{W}_\psi f \rvert$ amounts to recovering the sign of $\mathcal{W}_\psi f$. 

Secondly, we want to mention \cite{mallat2015phase} in which the authors prove injectivity of the operator $\mathcal{A}_\psi : \mathcal{H}_+ / \sim \, \to [0,+\infty)^{\R \times \{1,a\}}$, where $a > 1$ and 
\[
    \mathcal{H}_+ := \{ f \in L^2(\R) \,;\, \operatorname{supp} \widehat f \subset \R_+ \}
\]
is the space of \emph{analytic signals}. They do so for a specific family of \emph{progressive\footnote{Here and throughout this paper, we will call a wavelet $\psi \in L^1(\R)$ progressive if it only has positive frequencies, i.e.~$\psi \in \mathcal{H}_+$.} wavelets} called the \emph{Cauchy wavelets}. In other words, the authors of \cite{mallat2015phase} show that the magnitude of the Cauchy wavelet transform of any signal $f \in L^2(\R)$ uniquely determines its \emph{analytic representation} $f_+ \in L^2(\R)$, given by 
\begin{equation*}
    \widehat f_+ (\xi) := 2 \widehat f (\xi) \boldsymbol{1}_{\xi > 0}, \qquad \xi \in \R,
\end{equation*}
up to global phase. It is worthwhile pointing out that the negative frequencies of the signal $f$ are inevitably lost in the measurement process as the wavelet transform with a progressive wavelet satisfies 
\begin{align*}
    \mathcal{W}_\psi f (b,a) &= \int_\R \widehat f (\xi) \overline{\widehat \psi (a\xi)} \mathrm{e}^{2\pi\mathrm{i}\xi b} \, \mathrm{d} \xi = \int_0^\infty \widehat f (\xi) \overline{\widehat \psi (a\xi)} \mathrm{e}^{2\pi\mathrm{i}\xi b} \, \mathrm{d} \xi \\ &= \frac12 \int_0^\infty \widehat f_+ (\xi) \overline{\widehat \psi (a\xi)} \mathrm{e}^{2\pi\mathrm{i}\xi b} \, \mathrm{d} \xi = \frac12 \mathcal{W}_\psi f_+ (b,a),
\end{align*}
where the first equality is due to a direct application of Plancherel's theorem. The results in \cite{mallat2015phase} are thus optimal in the sense that one cannot hope to recover the negative frequencies of $f \in L^2(\R)$ from its Cauchy wavelet transform $\mathcal{W}_\psi f$. Since wavelet phase retrieval (for one-dimensional signals) appears to be almost exclusively useful to practitioners of audio processing, it is tempting to argue that this limitation is not really important: Indeed, in audio processing one is predominantly interested in real-valued signals and the negative frequencies of real-valued signals $f \in L^2(\R)$ are uniquely determined by the analytic representation $f_+$ through the relation
\begin{equation*}
    \widehat f (-\xi) = \overline{\widehat f(\xi)} = \frac{\overline{\widehat f_+ (\xi)}}{2}, \qquad \xi > 0.
\end{equation*}

In phase retrieval, however, this observation only tells part of the story because determining the analytic representation $f_+$ of a real-valued signal $f \in L^2(\R)$ up to global phase does \emph{not} amount to retrieving the real-valued signal up to global sign as one could hope: Indeed suppose that $f,g \in L^2(\R)$ are real-valued and satisfy $g_+ = \tau f_+$, for some $\tau \in \T$. Then, it follows that 
\begin{equation*}
    \widehat g (-\xi) = \frac{\overline{\widehat g_+ (\xi)}}{2}
    = \overline{\tau} \frac{\overline{\widehat f_+ (\xi)}}{2}
    = \overline{\tau} \widehat f(-\xi), \qquad \xi > 0.
\end{equation*}
One may thereby see that $\widehat f$ and $\widehat g$ do not necessarily agree up to global phase and it follows that $f$ and $g$ do not necessarily agree up to global sign (for more details see also Remark \ref{rem:rimas_counterexample}).

\paragraph{Contributions} In this paper, we present a new uniqueness result for wavelet phase retrieval in which the underlying wavelets are allowed to be \emph{complex-valued}. In contrast to \cite{alaifari2017reconstructing}, we are thus considering a phase recovery problem in which the measurements are complex-valued in general. Furthermore, our results guarantee the unique recovery of the real-valued signals themselves, instead of their analytic representations merely.

To the best of our knowledge our result is one of the first uniqueness results for wavelet phase retrieval in the literature (apart from the already mentioned \cite{alaifari2017reconstructing,jaming2014uniqueness,mallat2015phase}). It is an attempt to partially answer the conjecture in \cite{waldspurger17} that \emph{``(...) the inverse problem\footnote{By ``the inverse problem'' the phase retrieval problem is meant. The corresponding direct problem is to evaluate the phase retrieval operator $\mathcal{A}_\psi$.} is well-posed for generic wavelet families.''} To be precise, we develop the following result:

\begin{teo}[Cf.~Theorem \ref{thm:unquenessresultreal}]
    \label{thm:mainthmI}
    Let $\psi \in L^1(\R)$ be a wavelet with finitely many vanishing moments. Then, any real-valued \emph{bandlimited} function $f \in L^2(\R)$ is uniquely determined by $\lvert \mathcal{W}_\psi f \rvert$ up to global sign. 
\end{teo}

Restriction of the domain of $\mathcal A_\psi$ to real-valued bandlimited functions is motivated by audio processing in which this is typically a reasonable assumption.

Apart from Theorem \ref{thm:mainthmI}, we also present the first uniqueness results for \emph{sampled} wavelet phase retrieval in which the underlying wavelets are allowed to be complex-valued. Inter alia, we are able to prove the following result:

\begin{teo}[Cf.~Theorem \ref{thm:unquenessresultrealsampled2}]
    \label{thm:mainthmII}
    Let $a > 1$, $b > 0$ and let $\psi \in L^1(\R)$ be a wavelet with finitely many vanishing moments. Then, any real-valued bandlimited function $f \in L^2(\R)$ is uniquely determined by the measurements $\lvert \mathcal{W}_\psi f \rvert$ on the discrete set $a^{-\N}(b \mathbb{Z} \times \{1\})$ up to global sign. 
\end{teo}

Note that the measurements $\lvert \mathcal{W}_\psi f \rvert$ on the discrete set $a^{-\N}(b \mathbb{Z} \times \{1\})$ exactly correspond to the phaseless wavelet coefficients at fine scales (i.e.~$k \geq 1$) for the wavelet system
\begin{equation*}
    \mathcal{W}(\psi,a,b) := \{ a^{-k} \psi(a^{-k} \cdot - bm) \}_{k,m \in \mathbb{Z}}.
\end{equation*}
Our result is therefore compatible with the classical theory on wavelet frames \cite{daub92,heil2011basis}. In addition, we want to point out that our result is reminiscent of the recent uniqueness results for Gabor phase retrieval from samples \cite{alaifariwell20,grohs2020injectivity}.

Finally, we apply some of the insights used in the proofs of the Theorems \ref{thm:mainthmI} and \ref{thm:mainthmII} to reconsider the work presented in \cite{mallat2015phase}. In this way, we are able to prove that the analytic representation of a bandlimited signal is uniquely determined by the magnitude of its Cauchy wavelet transform on the set $b \mathbb{Z} \times \{1,a\}$ up to global phase, where $2 b > 0$ is upper bounded by the Nyquist rate and $a > 1$ (cf.~Theorem \ref{thm:cwtmaintheorem}).

%

\paragraph{Outline} In Section~\ref{sec:preliminaries}, we recall the definition of the wavelet transform and of the Paley--Wiener space. Moreover, we prove some auxiliary results that are needed later in the paper. In  Section~\ref{sec:main_results}, we state and prove Theorems \ref{thm:mainthmI} and \ref{thm:mainthmII}. In Section~\ref{sec:examples}, we apply Theorems \ref{thm:mainthmI} and \ref{thm:mainthmII} to the Morlet wavelet and the chirp wavelet. Finally, in Section~\ref{sec:cwt}, we consider the results from \cite{mallat2015phase} and prove a sampling result for the Cauchy wavelet transform.

\paragraph{Notation} We set $\R_+ := (0,+\infty)$. For any $p\in[1,+\infty]$ we denote by $L^p(\R)$ the Banach space of functions $f:\R\rightarrow\C$ which are $p$-integrable with respect to the Lebesgue measure and we use the notation $\|\cdot\|_p$ for the corresponding norms. The Fourier transform on $L^1(\R)$ is defined by
\[
\widehat{f}(\xi) := \int_{\R} f(x) \E^{-2\pi \mathrm{i} 
  x \xi } \,\mathrm{d} x, \qquad \xi \in \R,
\] 
and it extends to $L^2(\R)$ by a classical density argument. Finally, for $\ell \in \N$ and any sufficiently smooth function $f$, we denote by $f^{(\ell)}$ the $\ell$-th derivative.

\section{Preliminaries}\label{sec:preliminaries}
The translation and dilation operators act on a
function $f:\R\to \C$ by
\[
T_bf(x)=f(x-b),
\qquad
D_af(x)=a^{-1}f\left(a^{-1}x\right),
\]
respectively, for every $b\in\R$ and $a\in\R_+$. Both operators
map $L^p(\R)$ onto itself and $D_a$ is normalized to be an isometry on $L^1(\R)$. For every $a\in\R_+$, we will use the notation $f_a=D_af$. Furthermore, let us denote $f^\#(x)=\overline{f(-x)}$, $x\in\R$. 
\begin{defn}\label{defn:wavelettransform}
Let $1\leq p\leq\infty$. The wavelet transform of $f\in L^p(\R)$ associated with $\psi\in L^1(\R)$ is defined by 
\begin{equation}\label{eq:waveletcoefficients}
\mathcal{W}_{\psi}f(b,a)= (f *\psi_a^\#)(b)=\frac{1}{a}\int_{\R}f(x)\overline{\psi\left(\frac{x-b}{a}\right)}\D x,
\end{equation}
for every $b\in\R$ and $a\in\R_+$. 
\end{defn}
We observe that, by Young's inequality, $\mathcal{W}_{\psi}f(\cdot,a)\in L^p(\R)$ for every $a\in\R_+$. We refer also to \eqref{eq:waveletcoefficients} as the wavelet coefficient of $f$ at $(b,a)$ with respect to $\psi$. In this context, a non-zero function $\psi\in L^1(\R)$ is called a wavelet if $\widehat{\psi}(0)=0$, or equivalently if 
\begin{equation}\label{eq:zeromeancondition}
\int_{\R}\psi(x){\rm d}x=0.
\end{equation}
The name ``wavelet'' refers to the fact that condition \eqref{eq:zeromeancondition} forces such functions to have some oscillations. It is well known that if $\psi\in L^2(\R)$ is a progressive
wavelet, i.e.~a wavelet with only positive frequencies, satisfying the \emph{admissibility condition}
\[
0<\int_0^{+\infty} \frac{|\widehat{\psi}(\xi)|^2}{\xi}{\rm d}\xi<+\infty,
\]
then the wavelet transform is a constant multiple of an isometry from the analytic signals $\cH_+$ into $L^2(\R\times\R_+,a^{-1}{\rm d}b{\rm d}a)$ (see e.g. \cite[Theorem 22.0.6]{hol95}). It follows in particular that the wavelet transform is injective under the above conditions.

We fix $\Omega>0$ and we denote by $\text{PW}_{\Omega}$ the space of bandlimited functions 
\[
\text{PW}_{\Omega}=\{f\in L^2(\R):\text{supp}\widehat{f}\subseteq[-\Omega,\Omega]\}
\]
which is a closed subspace of $L^2(\R)$. By the Paley--Wiener theorem, every $f\in \text{PW}_{\Omega}$ has an analytic extension to an entire function of exponential type which we also denote by $f$. More precisely,
\[
|f(z)|\leq\frac{1}{\sqrt{2\pi}}\|\widehat{f}\|_1 \text{e}^{|\text{Im}z|\Omega},\quad z\in\mathbb{C}.
\]
We can therefore consider $\text{PW}_{\Omega}$ to be a Hilbert space of entire functions. Furthermore, the space of bandlimited functions $\text{PW}_{\Omega}$ is a reproducing kernel Hilbert space (RKHS) (see for example \cite[Chapter 2]{daub92}). This means that for every $x\in\R$ the evaluation operator $L_x\colon \text{PW}_{\Omega}\to\C$ defined by 
\[
L_x(f)=f(x),\qquad f\in\text{PW}_{\Omega},
\]
is bounded. Therefore, if $f_n$ is a sequence in $\text{PW}_{\Omega}$ which converges to $f$ in $L^2(\R)$ as $n\to\infty$, then 
\[
f_n(x)\to f(x),\quad n\to\infty,
\]
for every $x\in\R$. The next lemma will play a crucial role in the proof of our main results. It follows immediately from Theorem 1 on p.~723 of \cite{thakur10}.

\begin{lem}\label{lem:uniquenessmodulus}
Let f be an entire function real-valued on the real line. Then, $f$ is uniquely determined by $\{|f(x)|:x\in\R\}$ up to global sign. 
\end{lem}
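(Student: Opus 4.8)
The plan is to exploit that a real-valued function known in modulus is known in square, and then to use the rigidity of entire functions to fix the sign globally. Suppose $g\in{\rm PW}_{\Omega}$ is another real-valued function with $|g(x)|=|f(x)|$ for every $x\in\R$; the goal is to conclude that $g=\pm f$. By the Paley--Wiener theorem recalled above, both $f$ and $g$ extend to entire functions of exponential type, and since they are real-valued on $\R$ we have $f(x)^2=|f(x)|^2=|g(x)|^2=g(x)^2$ for all $x\in\R$.

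The next step is to promote this pointwise identity on $\R$ to an identity between entire functions. The functions $f^2$ and $g^2$ are entire, being products of entire functions, and they coincide on $\R$, a set with accumulation points in $\C$. By the identity theorem, $f^2\equiv g^2$ on all of $\C$, so the entire function $f^2-g^2=(f-g)(f+g)$ vanishes identically.

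The crucial step is then to convert the factorization $(f-g)(f+g)\equiv 0$ into a single global conclusion. Here I would invoke that the ring of entire functions is an integral domain: if neither $f-g$ nor $f+g$ were identically zero, each would have only isolated zeros, so the union of their zero sets would be countable and could not cover the uncountable set $\C$, contradicting $(f-g)(f+g)\equiv 0$. Hence $f-g\equiv 0$ or $f+g\equiv 0$, that is $g=f$ or $g=-f$ everywhere, which is precisely uniqueness up to a global sign.

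The main obstacle, and the reason a naive pointwise argument fails, is exactly this last point: on the real line one only knows that at each $x$ either $f(x)=g(x)$ or $f(x)=-g(x)$, and a priori the choice of sign could switch from point to point. What excludes this is the analyticity coming from bandlimitedness, through the integral-domain property of entire functions, which forces one global sign. An alternative route, matching the sampling-based proof referenced in the text, is to note that $f\in L^2\cap L^\infty\subseteq L^4$, so that $f^2\in{\rm PW}_{2\Omega}$; one then recovers $f^2$ from samples via a Shannon-type sampling theorem and resolves the sign by the same rigidity argument.
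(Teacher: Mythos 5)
Your proof is correct, but it takes a genuinely different route from the paper: the paper gives no self-contained proof of this lemma, deferring instead to the sampling theorem of Thakur \cite{thakur10} (with \cite{alaifariwell20} cited for an alternative proof). Those references establish a strictly stronger, discrete statement --- a real-valued bandlimited function is determined up to sign by \emph{unsigned samples} $|f(t_n)|$ taken at more than twice the Nyquist rate --- of which the lemma, where $|f|$ is known on all of $\R$, is an immediate consequence. Your argument is instead elementary and self-contained: for real-valued $f,g\in{\rm PW}_\Omega$ the hypothesis gives $f^2=g^2$ on $\R$, the identity theorem upgrades this to $f^2\equiv g^2$ on $\C$ (both extend to entire functions by Paley--Wiener), and the integral-domain property of entire functions --- your countability argument, or more simply: if $f-g\not\equiv 0$ then it is nonvanishing on some open disc, forcing $f+g\equiv 0$ there and hence everywhere --- rules out exactly the point-by-point sign switches that you rightly identify as the crux. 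As for what each approach buys: the citation route imports the stronger sampled result, but your continuum version is all this paper actually uses, since in Theorem~\ref{thm:unquenessresultrealsampled} the passage from magnitude samples to magnitudes on all of $\R$ is handled separately by Proposition~\ref{prop:squareinpw} and the WSK sampling theorem \emph{before} the lemma is invoked; your proof would therefore make the paper's main results self-contained, at the price of not recovering the discrete-sampling refinement available in \cite{thakur10}.
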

 
It is worth observing that if $f\in\text{PW}_{\Omega}$ and $\psi\in L^1(\R)$, then $\mathcal{W}_{\psi}f(\cdot,a)$ is also a bandlimited function for every $a\in\R_+$:

\begin{lem}\label{lem:wavelettransformbandlimited}
Let $a \in \R_+$ and $\Omega>0$. Furthermore, let $f\in\text{PW}_{\Omega}$ and $\psi\in L^1(\R)$. Then, we have that $\mathcal{W}_{\psi}f(\cdot,a)\in \text{PW}_{\Omega}$.
\end{lem}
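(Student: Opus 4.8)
The plan is to recognise the wavelet transform as a convolution for each fixed scale and then pass to the frequency side via the convolution theorem. By Definition~\ref{defn:wavelettransform}, for fixed $a\in\R_+$ we have $\mathcal{W}_{\psi}f(\cdot,a)=f*\psi_a^\#$. First I would record that $\psi_a^\#\in L^1(\R)$: the dilation $D_a$ is an isometry of $L^1(\R)$ and the operation $g\mapsto g^\#$ preserves $L^1(\R)$, so $\psi\in L^1(\R)$ forces $\psi_a^\#\in L^1(\R)$. Combined with $f\in\text{PW}_{\Omega}\subseteq L^2(\R)$, Young's inequality (as already observed after Definition~\ref{defn:wavelettransform}) gives $\mathcal{W}_{\psi}f(\cdot,a)\in L^2(\R)$, so it remains only to control the spectrum.

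The core of the argument is a frequency-side identity. For $f\in L^2(\R)$ and $h\in L^1(\R)$ the convolution $f*h$ lies in $L^2(\R)$ and satisfies $\widehat{f*h}=\widehat{f}\,\widehat{h}$, where $\widehat{h}$ is the bounded continuous Fourier transform of the $L^1$ function $h$ and the product is understood as the pointwise product of the $L^2$ function $\widehat{f}$ with the $L^\infty$ function $\widehat{h}$. Applying this with $h=\psi_a^\#$ yields
\[
\widehat{\mathcal{W}_{\psi}f(\cdot,a)}(\xi)=\widehat{f}(\xi)\,\widehat{\psi_a^\#}(\xi),\qquad \xi\in\R.
\]
Since $\widehat{f}$ is supported in $[-\Omega,\Omega]$ and $\widehat{\psi_a^\#}\in L^\infty(\R)$, the product is again in $L^2(\R)$, consistent with the conclusion of the previous paragraph.

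Finally I would read off the support. As $f\in\text{PW}_{\Omega}$, the function $\widehat{f}$ vanishes almost everywhere outside $[-\Omega,\Omega]$; hence so does the product $\widehat{f}\,\widehat{\psi_a^\#}$, regardless of the behaviour of $\widehat{\psi_a^\#}$. Therefore $\operatorname{supp}\widehat{\mathcal{W}_{\psi}f(\cdot,a)}\subseteq[-\Omega,\Omega]$, which together with $\mathcal{W}_{\psi}f(\cdot,a)\in L^2(\R)$ shows $\mathcal{W}_{\psi}f(\cdot,a)\in\text{PW}_{\Omega}$, as claimed. This lemma is essentially routine; the only points that deserve to be stated explicitly rather than taken for granted are the validity of the convolution theorem in the mixed $L^1$/$L^2$ setting and the elementary fact that the essential support of a pointwise product is contained in the essential support of either factor. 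These are the closest thing to an obstacle, but both are standard.
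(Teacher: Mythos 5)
Your proof is correct and follows essentially the same route as the paper, which simply invokes the convolution theorem together with the identity $(\psi_a^\#)^\wedge(\xi)=\overline{\widehat{\psi}(a\xi)}$; your version merely spells out the mixed $L^1$/$L^2$ convolution theorem and the support argument in full detail. The explicit form of $(\psi_a^\#)^\wedge$ is not even needed for your argument, since, as you note, the support of $\widehat{f}\,\widehat{\psi_a^\#}$ is contained in $\operatorname{supp}\widehat{f}\subseteq[-\Omega,\Omega]$ regardless.
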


\begin{proof}
It follows from the convolution theorem that $f*\psi_a^\#\in L^2(\R)$ and 
\begin{equation}\label{eq:convolutiontheorem}
 (f*\psi_a^\#)^\wedge(\xi)=\hat{f}(\xi)(\psi_a^\#)^\wedge(\xi),\qquad \xi\in\R.
 \end{equation}
Moreovoer, by the relation
\[
(\psi_a^\#)^\wedge(\xi)=\overline{\widehat{\psi}(a\xi)},\qquad \xi\in\R,
\]
equation \eqref{eq:convolutiontheorem} becomes  
 \[
 (f*\psi_a^\#)^\wedge(\xi)=\widehat{f}(\xi)\overline{\widehat{\psi}(a\xi)},\qquad \xi\in\R.
 \]
Then, since $\text{supp}\ (f*\psi_a^\#)^\wedge\subseteq \text{supp}\ \widehat{f}$ and $f\in\text{PW}_{\Omega}$, we conclude that $\mathcal{W}_{\psi}f(\cdot,a)\in \text{PW}_{\Omega}$.
\end{proof}


A first insight into wavelet phase retrieval comes from approximation theory.

\begin{defn}
An approximate identity is a family $\{\phi_\epsilon\}_{\epsilon\in\R_+}$ of functions in $L^1(\R)$ such that 
\begin{itemize}
\item[i)] $\int_\R\phi_{\epsilon}(x)\D x=1$ for every $\epsilon>0$,
\item[ii)] $\sup_{\epsilon>0}\lVert\phi_{\epsilon}\rVert_1<+\infty$,
\item[iii)] for every $\delta>0$,
\[
\lim_{\epsilon\to0}\int_{|x|\geq\delta}|\phi_{\epsilon}(x)|\D x=0.
\]
\end{itemize}
\end{defn}

\begin{ex}\label{ex:dilationapproidentity}
Let $\phi\in L^1(\R)$ be such that 
\[
\int_{\R}\phi(x)\D x=1,
\]
or equivalently $\widehat{\phi}(0)=1$,
and let $\phi_a(x)=a^{-1}\phi(a^{-1}x)$.
Then, the family of functions $\{\phi_a\}_{a\in\R_+}$ forms an approximate identity. 
\end{ex}
Let $1\leq p<\infty$. It is a well-known fact that the convolution $f*\phi_\epsilon$ converges to $f$ in the $L^p$-norm for every $f\in L^p(\R)$ (see for instance Theorem 1.2.19 on p.~27 of \cite{grafakos2014classical} or consider the part on approximate identities in the classical books \cite{simon,rudin91}):
\begin{prop}\label{prop:convergencenormp}
Let $\{\phi_{\epsilon}\}_{\epsilon\in\R_+}$ be an approximate identity and $1\leq p<\infty$. Then, $f*\phi_\epsilon\in L^p(\R)$ for every $f\in L^p(\R)$  and $\epsilon\in\R_+$. Moreover,
\[
\lim_{\epsilon\to0^+}\|f-f*\phi_\epsilon\|_p=0.
\]
\end{prop}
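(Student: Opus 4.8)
The plan is to treat the two assertions separately. The membership $f*\phi_\epsilon\in L^p(\R)$ follows immediately from Young's convolution inequality (already used above for the wavelet transform): since $f\in L^p(\R)$ and $\phi_\epsilon\in L^1(\R)$, one has $f*\phi_\epsilon\in L^p(\R)$ with $\|f*\phi_\epsilon\|_p\leq\|\phi_\epsilon\|_1\,\|f\|_p$, so this part requires no further work. For the convergence, the starting point is the normalization (i), which lets me write, for almost every $x\in\R$,
\[
f(x)-(f*\phi_\epsilon)(x)=\int_\R\bigl(f(x)-T_yf(x)\bigr)\phi_\epsilon(y)\,\D y,
\]
where $T_yf(x)=f(x-y)$. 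Applying Minkowski's integral inequality to interchange the $L^p$-norm and the integral then yields
\[
\|f-f*\phi_\epsilon\|_p\leq\int_\R\|f-T_yf\|_p\,|\phi_\epsilon(y)|\,\D y.
\]

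The crucial ingredient is the continuity of translation in $L^p(\R)$ for $1\leq p<\infty$, i.e. the fact that $\omega(y):=\|f-T_yf\|_p$ tends to $0$ as $y\to0$; this is where the hypothesis $p<\infty$ enters, and it is itself established by approximating $f$ in the $L^p$-norm by continuous compactly supported functions. Granting this, I would fix $\eta>0$ and choose $\delta>0$ so that $\omega(y)<\eta$ whenever $|y|<\delta$, and split the integral above at $|y|=\delta$. On the region $\{|y|<\delta\}$ the integrand is bounded by $\eta\,|\phi_\epsilon(y)|$, whose integral is at most $\eta\sup_{\epsilon>0}\|\phi_\epsilon\|_1$ by property (ii); on $\{|y|\geq\delta\}$ I would use the uniform bound $\omega(y)\leq 2\|f\|_p$ (translation being isometric on $L^p$) together with property (iii), which forces $\int_{|y|\geq\delta}|\phi_\epsilon(y)|\,\D y\to0$ as $\epsilon\to0^+$. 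Letting first $\epsilon\to0^+$ and then $\eta\to0$ gives $\lim_{\epsilon\to0^+}\|f-f*\phi_\epsilon\|_p=0$.

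The main obstacle is the continuity of translation in $L^p$; once that is in hand, the remainder is the standard $\delta$-splitting argument driven by the three defining properties of an approximate identity. Beyond this, the only technical point to justify carefully is the passage of the $L^p$-norm inside the integral via Minkowski's integral inequality, which is routine.
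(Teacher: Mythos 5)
Your proof is correct and complete. Note, however, that the paper itself offers no proof of this proposition: it is stated as a well-known fact from approximation theory (introduced with ``It is a well-known fact that\dots''), so there is no internal argument to compare yours against. What you have written is precisely the standard textbook proof that the authors implicitly invoke: Young's inequality gives the membership $f*\phi_\epsilon\in L^p(\R)$ with $\|f*\phi_\epsilon\|_p\leq\|\phi_\epsilon\|_1\|f\|_p$; the normalization property (i) lets you write $f-f*\phi_\epsilon$ as an integral of $(f-T_yf)\phi_\epsilon(y)$, Minkowski's integral inequality moves the $L^p$-norm inside, and the $\delta$-splitting combines continuity of translation in $L^p$ (the one place where $1\leq p<\infty$ is genuinely needed) with the uniform bound from (ii) near the origin and the mass-escape property (iii) away from it. All steps are sound, and you correctly flag the continuity of translation as the only ingredient requiring its own (density-based) justification.
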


Proposition~\ref{prop:convergencenormp} together with Lemma~\ref{lem:uniquenessmodulus} implies that, given an approximate identity $\{\phi_{\epsilon}\}_{\epsilon\in\R_+}$, any real-valued $f\in {\rm PW}_{\Omega}$ can be uniquely recovered (up to a global sign factor) from the measurements $\{|f*\phi_\epsilon|\}_{\epsilon\in\R_+}$: 

\begin{teo}\label{thm:unquenessresultrealfirst}
Let $\{\phi_{\epsilon}\}_{\epsilon\in\R_+}$ be an approximate identity. Then, the following are equivalent for $f,g\in {\rm PW}_{\Omega}$ real-valued on the real line:
\begin{itemize}
\item[i)] $|f * \phi_\epsilon|=|g * \phi_\epsilon|,\quad \epsilon\in\mathbb{R}_+$;
\item[ii)] $f=\pm g$.
\end{itemize}
\end{teo}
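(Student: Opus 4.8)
The plan is to establish the non-trivial implication (i) $\Rightarrow$ (ii), since the converse is immediate: if $f = \pm g$, then by linearity of convolution $f * \phi_\epsilon = \pm (g * \phi_\epsilon)$ for every $\epsilon \in \R_+$, and taking moduli gives $|f * \phi_\epsilon| = |g * \phi_\epsilon|$. The strategy for the forward direction is to promote the assumed equality of moduli of the smoothed functions to an equality of moduli of $f$ and $g$ themselves, and then to invoke Lemma~\ref{lem:uniquenessmodulus}.

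First I would observe that the smoothing preserves bandlimitedness. Since $f, g \in \text{PW}_{\Omega}$ and $\phi_\epsilon \in L^1(\R)$, the convolution theorem gives $\widehat{f * \phi_\epsilon} = \widehat{f}\, \widehat{\phi_\epsilon}$, so that $\supp \widehat{f * \phi_\epsilon} \subseteq \supp \widehat{f} \subseteq [-\Omega, \Omega]$; hence $f * \phi_\epsilon \in \text{PW}_{\Omega}$, and likewise $g * \phi_\epsilon \in \text{PW}_{\Omega}$. This membership is the crucial preliminary, as it is what lets the RKHS structure be applied in the next step.

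Next, applying Proposition~\ref{prop:convergencenormp} with $p = 2$, I would deduce that $f * \phi_\epsilon \to f$ and $g * \phi_\epsilon \to g$ in the $L^2$-norm as $\epsilon \to 0^+$. Because $\text{PW}_{\Omega}$ is a reproducing kernel Hilbert space, convergence in $L^2(\R)$ of a sequence lying in $\text{PW}_{\Omega}$ forces pointwise convergence; thus $(f * \phi_\epsilon)(x) \to f(x)$ and $(g * \phi_\epsilon)(x) \to g(x)$ for every $x \in \R$. Reading hypothesis (i) pointwise as $|(f * \phi_\epsilon)(x)| = |(g * \phi_\epsilon)(x)|$ and letting $\epsilon \to 0^+$ then yields $|f(x)| = |g(x)|$ for all $x \in \R$. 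Since $f$ and $g$ are real-valued elements of $\text{PW}_{\Omega}$ sharing the same modulus on the real line, Lemma~\ref{lem:uniquenessmodulus} gives $f = \pm g$, which is (ii).

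I do not expect a serious obstacle here, as the argument is a clean assembly of the stated auxiliary results. The only point that genuinely requires care is the passage from $L^2$-convergence to pointwise convergence: this relies precisely on the RKHS property of $\text{PW}_{\Omega}$ and therefore on having first verified that the smoothed functions $f * \phi_\epsilon$ and $g * \phi_\epsilon$ remain in $\text{PW}_{\Omega}$. For this reason the bandlimitedness check is the substantive preliminary step rather than a routine remark.
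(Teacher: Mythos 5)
Your proof is correct and follows essentially the same route as the paper's: convolution theorem to keep $f*\phi_\epsilon$, $g*\phi_\epsilon$ in $\mathrm{PW}_\Omega$, Proposition~\ref{prop:convergencenormp} for $L^2$-convergence, the RKHS property to upgrade this to pointwise convergence, and then Lemma~\ref{lem:uniquenessmodulus} applied to $|f|=|g|$. No gaps; your emphasis on the bandlimitedness check as the step enabling the RKHS argument matches the paper's reasoning exactly.
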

\begin{proof}
It is clear that if $f=\pm g$, then $i)$ holds. Conversely, we suppose that $|f * \phi_\epsilon|=|g * \phi_\epsilon|$ for every $\epsilon\in\mathbb{R}_+$. By Proposition~\ref{prop:convergencenormp}, we have that $f*\phi_\epsilon$ and $g*\phi_\epsilon$ converge to $f$ and $g$ in $L^2(\R)$ as $\epsilon\to0^+$, respectively. Since by the convolution theorem $f*\phi_\epsilon$ and $g*\phi_\epsilon$ belong to $\text{PW}_{\Omega}$ for every $\epsilon\in\R_+$ and $\text{PW}_{\Omega}$ is a RKHS, we have that $f*\phi_\epsilon$ and $g*\phi_\epsilon$ converge to $f$ and $g$ pointwise as $\epsilon\to0^+$. Furthermore, since the modulus is a continuous function, $|f*\phi_\epsilon|$ and $|g*\phi_\epsilon|$ converge pointwise to $|f|$ and $|g|$ as $\epsilon\to0^+$. Therefore, our assumption implies $|f(x)|=|g(x)|$ for every $x\in\R$. Hence, by Lemma~\ref{lem:uniquenessmodulus}, we can conclude that $f=\pm g$.
\end{proof}

Let us fix $\psi \in L^1(\R)$ such that $\widehat \psi(0) = 1$. It follows from the definition of the wavelet transform (cf.~Definition \ref{defn:wavelettransform}) together with the considerations in Example \ref{ex:dilationapproidentity} and Theorem \ref{thm:unquenessresultrealfirst} that any real-valued $f\in{\rm PW}_\Omega$ can be uniquely recovered (up to a global sign factor) from the magnitude of its wavelet transform $\{|f*\psi_a^\#|\}_{a\in\mathbb{R}_+}$.
Unfortunately, we cannot apply Theorem~\ref{thm:unquenessresultrealfirst} when $\psi$ is a classical wavelet since wavelets are always assumed to have zero mean. It is therefore natural to ask if it is possible to recover the same uniqueness result
when $\widehat{\psi}(0)=0$. The next section is devoted to answering this question. 

\section{Main results}
\label{sec:main_results}

We say that a function $\psi\in L^1(\R)$ has $n$ vanishing moments, for $n \in \mathbb{N}$, if it satisfies
\begin{equation}\label{eq:vanishingmoments}
\int_{\R}x^\ell\psi(x)\,\mathrm{d} x=0,\qquad \ell=0,\ldots,n. 
\end{equation}
By the definition of the Fourier transform, condition~\eqref{eq:vanishingmoments} with $n=0$ is equivalent to $\widehat{\psi}(0)=0$. In general, we have the following result:
\begin{prop}[{\cite[Lemma 6.0.4]{hol95}}]
Let $n\in\N$ and $\psi\in L^1(\R)$ be such that $x^n\psi\in L^1(\R)$. Then, $\psi$ has $n$ vanishing moments if and only if 
\[
\lim_{\xi\to0} \xi^{-n}\widehat{\psi}(\xi)=0.
\] 
\end{prop}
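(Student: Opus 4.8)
The plan is to translate both conditions into statements about the derivatives of $\widehat{f}$ at the origin and then compare them by a single Taylor expansion. First I would observe that the hypotheses $f\in L^1(\R)$ and $x^nf\in L^1(\R)$ force $x^kf\in L^1(\R)$ for every $k\in\{0,\ldots,n\}$, since $|x|^k\leq 1+|x|^n$ pointwise. This integrability is exactly what is needed to differentiate under the integral sign $n$ times: one shows by induction that $\widehat{f}\in C^n(\R)$ with
\[
\widehat{f}^{(k)}(\xi)=(-2\pi i)^k\int_{\R}x^kf(x)\,\E^{-2\pi i\xi x}\D x,\qquad k=0,\ldots,n.
\]
Evaluating at $\xi=0$ gives $\widehat{f}^{(k)}(0)=(-2\pi i)^k\int_{\R}x^kf(x)\D x$, so that $f$ has $n$ vanishing moments in the sense of \eqref{eq:vanishingmoments} if and only if $\widehat{f}^{(k)}(0)=0$ for all $k=0,\ldots,n$.

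With this reformulation, the proposition reduces to a purely one-variable statement. Since $\widehat{f}\in C^n(\R)$, Taylor's theorem with the Peano form of the remainder yields
\[
\widehat{f}(\xi)=\sum_{k=0}^{n}\frac{\widehat{f}^{(k)}(0)}{k!}\,\xi^k+o(\xi^n),\qquad \xi\to0.
\]
For the forward implication, if all the derivatives $\widehat{f}^{(k)}(0)$ vanish for $k=0,\ldots,n$, the expansion collapses to $\widehat{f}(\xi)=o(\xi^n)$, which is precisely $\lim_{\xi\to0}\widehat{f}(\xi)/\xi^n=0$. For the converse I would argue via the lowest nonvanishing coefficient: dividing the expansion by $\xi^n$ gives
\[
\frac{\widehat{f}(\xi)}{\xi^n}=\sum_{k=0}^{n}\frac{\widehat{f}^{(k)}(0)}{k!}\,\xi^{k-n}+o(1),
\]
and if $k_0$ denotes the smallest index with $\widehat{f}^{(k_0)}(0)\neq0$, then the term of order $\xi^{k_0-n}$ dominates as $\xi\to0$; when $k_0<n$ the quotient diverges, while when $k_0=n$ it converges to the nonzero constant $\widehat{f}^{(n)}(0)/n!$. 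In either case the limit cannot be zero, so all these derivatives must vanish.

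The steps are all standard, and the only point requiring genuine care is the justification of repeated differentiation under the integral sign. Here one combines the mean value theorem with the dominated convergence theorem, using the integrable majorants $|x|^kf$ established at the outset; the continuity of each $\widehat{f}^{(k)}$ then follows because it is, up to the constant $(-2\pi i)^k$, the Fourier transform of the $L^1$ function $x^kf$. Once this regularity is in place, the Taylor argument above closes both directions simultaneously.
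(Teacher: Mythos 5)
Your proof is correct and complete. One point of comparison to note: the paper does not prove this proposition at all --- it is quoted verbatim from \cite[Lemma 6.0.4]{hol95} --- so there is no in-paper argument to measure yours against; what you have written is the standard self-contained proof of that cited lemma. Your reduction $|x|^k\leq 1+|x|^n$ correctly supplies the integrable majorants $2\pi|x|^k|f(x)|$ needed to differentiate under the integral sign up to order $n$ (to obtain $\widehat{f}^{(k)}$ one needs exactly $x^kf\in L^1(\R)$), the identification of the moments with $\widehat{f}^{(k)}(0)$ matches the paper's Fourier convention, and the Taylor expansion with Peano remainder settles both directions, with the converse handled correctly via the lowest nonvanishing derivative: if $k_0<n$ the quotient $\widehat{f}(\xi)/\xi^n$ diverges in modulus, and if $k_0=n$ it tends to $\widehat{f}^{(n)}(0)/n!\neq 0$, so in either case the hypothesis $\lim_{\xi\to 0}\widehat{f}(\xi)/\xi^n=0$ is contradicted.
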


We say that a function $\psi$ has a \emph{finite number of vanishing moments} if there exists an $\ell\in\N$ such that 
\begin{equation*}
    \lim_{\xi\to0}\xi^{-\ell}\widehat{\psi}(\xi)\in \C\setminus\{0\}.
\end{equation*}
It is a well-known fact that if we choose a wavelet with a finite number of vanishing moments, the wavelet transform approximates the derivatives of a smooth signal at fine scales, see e.g.~\cite[Chapter 6]{mallat98} or \cite[Chapter 4, \S 2]{hol95}. We are interested in a different setup than the one chosen in the references mentioned before and therefore present the following proposition and its proof.
\begin{prop}\label{thm:centraletheorem}
Let $\Omega>0$ and let $\psi\in L^1(\R)$ be such that 
\begin{equation*}
    \lim_{\xi\to0}\xi^{-\ell}\widehat{\psi}(\xi)= (-1)^\ell(2\pi i)^\ell,
\end{equation*}
for some $\ell \in \N$.
Then, for every $f\in{\rm PW}_\Omega$ 
\[
\lim_{a\to0^+}a^{-\ell}\mathcal{W}_{\psi}f(b,a)=f^{(\ell)}(b),\qquad b\in\R.
\]
\end{prop}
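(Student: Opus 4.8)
The plan is to pass to the Fourier domain, where the wavelet transform becomes multiplication by a rescaled copy of $\widehat\psi$, and then to combine Plancherel's theorem with the dominated convergence theorem; the compact support of $\widehat f$ (that is, bandlimitedness) is precisely what makes the argument go through.

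First I would compute the Fourier transform of the quantity to be estimated. By the convolution theorem together with the dilation relation $(\psi_a^\#)^\wedge(\xi)=\overline{\widehat\psi(a\xi)}$ recalled just after Lemma~\ref{lem:wavelettransformbandlimited}, one has
\[
\widehat{\mathcal W_\psi f(\cdot,a)}(\xi)=\widehat f(\xi)\,\overline{\widehat\psi(a\xi)},\qquad \xi\in\R.
\]
On the other hand, since $f\in\mathrm{PW}_\Omega$ has $\widehat f$ supported in $[-\Omega,\Omega]$, its $\ell$-th derivative satisfies $\widehat{f^{(\ell)}}(\xi)=(2\pi i\xi)^\ell\widehat f(\xi)$, and $f^{(\ell)}\in\mathrm{PW}_\Omega\subseteq L^2(\R)$ because the multiplier $(2\pi i\xi)^\ell$ is bounded on the band $[-\Omega,\Omega]$; in particular the statement is well-posed. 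Hence, by Plancherel's theorem,
\[
\big\|f^{(\ell)}-a^{-\ell}\mathcal W_\psi f(\cdot,a)\big\|_2^2=\int_{-\Omega}^{\Omega}\Big|(2\pi i\xi)^\ell-a^{-\ell}\overline{\widehat\psi(a\xi)}\Big|^2\,|\widehat f(\xi)|^2\,\D\xi,
\]
so it suffices to prove that this integral tends to $0$ as $a\to0^+$.

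Next I would verify pointwise convergence of the multiplier. Fixing $\xi\neq0$ and writing $a^{-\ell}\widehat\psi(a\xi)=\xi^\ell(a\xi)^{-\ell}\widehat\psi(a\xi)$, the hypothesis $\lim_{\eta\to0}\eta^{-\ell}\widehat\psi(\eta)=(-1)^\ell(2\pi i)^\ell$ applied with $\eta=a\xi\to0$ gives $a^{-\ell}\widehat\psi(a\xi)\to(-2\pi i\xi)^\ell$, and after conjugation (recall $\xi$ is real) $a^{-\ell}\overline{\widehat\psi(a\xi)}\to(2\pi i\xi)^\ell$. Thus the integrand converges to $0$ for almost every $\xi$ in $[-\Omega,\Omega]$.

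The main step, and the only place where bandlimitedness is genuinely used, is producing an $a$-independent dominating function so that dominated convergence applies. For this I would use the hypothesis once more: since $\eta^{-\ell}\widehat\psi(\eta)$ has a finite limit as $\eta\to0$, there are $\delta>0$ and $C>0$ with $|\eta^{-\ell}\widehat\psi(\eta)|\le C$ for $0<|\eta|\le\delta$. Then for all $a\le\delta/\Omega$ and all $\xi\in[-\Omega,\Omega]$ one has $|a\xi|\le\delta$, whence $|a^{-\ell}\widehat\psi(a\xi)|=|\xi|^\ell|(a\xi)^{-\ell}\widehat\psi(a\xi)|\le C\Omega^\ell$; together with $|(2\pi i\xi)^\ell|\le(2\pi\Omega)^\ell$, the integrand is bounded by the fixed integrable function $\big(C\Omega^\ell+(2\pi\Omega)^\ell\big)^2|\widehat f(\xi)|^2\boldsymbol{1}_{[-\Omega,\Omega]}(\xi)$ for every sufficiently small $a$. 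The dominated convergence theorem then forces the integral to vanish in the limit, which completes the proof. The essential point is that on a compact frequency band the rescaled argument $a\xi$ always stays inside the region where the near-origin asymptotics of $\widehat\psi$ provide control; this uniform domination would fail for a generic (non-bandlimited) $L^2$ function, and it is exactly here that $\mathrm{PW}_\Omega$ enters.
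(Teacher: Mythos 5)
Your proof is correct and takes essentially the same route as the paper: reduce via Plancherel to a frequency-side integral, verify pointwise convergence of the multiplier $(2\pi i\xi)^\ell - a^{-\ell}\overline{\widehat\psi(a\xi)}$ from the hypothesis on $\widehat\psi$, and conclude by dominated convergence. The only (cosmetic) difference is in the dominating function: you exploit the compact band and small $a$ so that $|a\xi|\le\delta$ lies in the region where $|\eta^{-\ell}\widehat\psi(\eta)|\le C$, dominating by a constant times $|\widehat f|^2$, whereas the paper dominates by $M\,|\xi^\ell\widehat f(\xi)|^2$ with $M=\sup_{\eta\in\R}\,|(2\pi i)^\ell-\eta^{-\ell}\overline{\widehat\psi(\eta)}|^2$, which is finite and $a$-independent thanks to the hypothesis near $0$ together with the boundedness of $\widehat\psi$.
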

\begin{proof}
By the definition of the wavelet transform and the Plancherel theorem, we have
\begin{align*}
\|f^{(\ell)}-a^{-\ell}\mathcal{W}_{\psi}f(\cdot,a)\|_2^2&=\|f^{(\ell)}-a^{-\ell}f * \psi_a^\#\|_2^2\\
&=\int_{\R}|\xi^\ell \widehat{f}(\xi)|^2|(2\pi i)^\ell-(a\xi)^{-\ell}\overline{\widehat{\psi}(a\xi)}|^2{\rm d}\xi.
\end{align*}
By the Riemann--Lebesgue lemma, $\widehat{\psi}$ is a continuous function which goes to zero at infinity and by hypothesis
\[
\lim_{\xi\to0}\frac{\widehat{\psi}(\xi)}{\xi^\ell}= (-1)^\ell(2\pi i)^\ell.
\]
Therefore, we have the estimate
\[
|\xi^\ell\widehat{f}(\xi)|^2|(2\pi i)^\ell-(a\xi)^{-\ell}\overline{\widehat{\psi}(a\xi)}|^2\leq M |\xi^\ell\widehat{f}(\xi)|^2,
\]
where $M=\sup_{\xi\in\R} |(2\pi i)^\ell-(a\xi)^{-\ell}\overline{\widehat{\psi}(a\xi)}|^2$ is finite and independent of $a$. Furthermore, for almost every $\xi\in\R$ it holds that
\[
\lim_{a\to0^+}|\xi^\ell\widehat{f}(\xi)|^2|(2\pi i)^\ell-(a\xi)^{-\ell}\overline{\widehat{\psi}(a\xi)}|^2=0.
\]
Hence, by the dominated convergence theorem
\[
\lim_{a\to0^+}\|f^{(\ell)}-a^{-\ell}\mathcal{W}_{\psi}f(\cdot,a)\|_2=0.
\]
Furthermore, by Lemma~\ref{lem:wavelettransformbandlimited}, we know that  $a^{-\ell}\mathcal{W}_{\psi}f(\cdot,a)$ belongs to $\text{PW}_{\Omega}$ for every $a\in\R_+$ and $\text{PW}_{\Omega}$ is a RKHS. Thus, $a^{-\ell}\mathcal{W}_{\psi}f(\cdot,a)$ converges pointwise to $f^{(\ell)}$ as $a\to0^+$, and this concludes the proof. 
\end{proof}

We are now in a position to state our first result establishing uniqueness of wavelet phase retrieval for real-valued bandlimited signals when the wavelet has finitely many vanishing moments.

\begin{teo}[Cf.~Theorem \ref{thm:mainthmI}]
\label{thm:unquenessresultreal}
    Let $\Omega>0$ and let $\psi\in L^1(\R)$ be such that 
    \begin{equation*}
        \lim_{\xi\to0}\xi^{-\ell}\widehat{\psi}(\xi)=c\in\mathbb{C}\setminus\{0\},
    \end{equation*}
    for some $\ell \in \N$.
    Then, the following are equivalent for $f,g\in {\rm PW}_{\Omega}$ real-valued on the real line:
    \begin{itemize}
        \item[i)] $|\mathcal{W}_{\psi}f(b,a)|=|\mathcal{W}_{\psi}g(b,a)|,\quad b\in\R,\, a\in\mathbb{R}_+$;
        \item[ii)] $f=\pm g$.
    \end{itemize}
\end{teo}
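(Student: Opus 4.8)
The implication ii)~$\Rightarrow$~i) is immediate: if $f = \pm g$, then $\mathcal{W}_{\psi}f = \pm\mathcal{W}_{\psi}g$ by linearity, so the two transforms share the same modulus. The content lies in the converse, and my plan is to transfer the phaseless information from the wavelet transform down to the $\ell$-th derivative of the signal itself, where the two bandlimited lemmas become available. In brief: let $a\to0^+$ in the scaled transform $a^{-\ell}\mathcal{W}_{\psi}f(\cdot,a)$ to recover $f^{(\ell)}$ pointwise, deduce $|f^{(\ell)}| = |g^{(\ell)}|$ on $\R$, apply Lemma~\ref{lem:uniquenessmodulus} to obtain $f^{(\ell)} = \pm g^{(\ell)}$, and finally lift this identity back up to $f = \pm g$.

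First I would remove the arbitrary constant $c$. Setting $\kappa = (-1)^\ell(2\pi i)^\ell/c$ and $\tilde\psi = \kappa\psi$, we have $\widehat{\tilde\psi}(\xi) = \kappa\widehat{\psi}(\xi)$, so $\tilde\psi$ satisfies the normalization $\lim_{\xi\to0}\xi^{-\ell}\widehat{\tilde\psi}(\xi) = (-1)^\ell(2\pi i)^\ell$ required by Proposition~\ref{thm:centraletheorem}. Since $\tilde\psi^\# = \bar\kappa\,\psi^\#$ yields $\mathcal{W}_{\tilde\psi}h = \bar\kappa\,\mathcal{W}_{\psi}h$ for every $h$, hypothesis i) is unaffected (up to the harmless positive factor $|\kappa|$) when $\psi$ is replaced by $\tilde\psi$; hence I may assume $c = (-1)^\ell(2\pi i)^\ell$ from the outset. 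Proposition~\ref{thm:centraletheorem} then gives $a^{-\ell}\mathcal{W}_{\psi}f(\cdot,a) \to f^{(\ell)}$ and $a^{-\ell}\mathcal{W}_{\psi}g(\cdot,a) \to g^{(\ell)}$ in $L^2(\R)$ as $a\to0^+$. Because each $\mathcal{W}_{\psi}f(\cdot,a)$ lies in $\text{PW}_\Omega$ by Lemma~\ref{lem:wavelettransformbandlimited}, and $f^{(\ell)}, g^{(\ell)} \in \text{PW}_\Omega$ as well (differentiation multiplies $\widehat{f}$ by $(2\pi i\xi)^\ell$, preserving the support in $[-\Omega,\Omega]$), the RKHS property upgrades this $L^2$-convergence to pointwise convergence for every $b\in\R$. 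Taking moduli and passing to the limit in the identity $a^{-\ell}|\mathcal{W}_{\psi}f(b,a)| = a^{-\ell}|\mathcal{W}_{\psi}g(b,a)|$ provided by i), I obtain $|f^{(\ell)}(b)| = |g^{(\ell)}(b)|$ for all $b\in\R$.

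Since $f$ is real-valued on $\R$, so is each real derivative $f^{(\ell)}$ (and likewise $g^{(\ell)}$), and both belong to $\text{PW}_\Omega$; hence Lemma~\ref{lem:uniquenessmodulus} applies and yields $f^{(\ell)} = \pm g^{(\ell)}$. The last step lifts this to the signals: if $f^{(\ell)} = g^{(\ell)}$, then $(f-g)^{(\ell)} \equiv 0$, so the entire function $f-g$ is a polynomial of degree at most $\ell-1$; but $f-g \in \text{PW}_\Omega \subseteq L^2(\R)$, and the only square-integrable polynomial on $\R$ is the zero polynomial, whence $f = g$, and symmetrically $f^{(\ell)} = -g^{(\ell)}$ forces $f = -g$. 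I expect this final lifting step — together with the routine justification that the limit commutes with the modulus — to be the only genuinely delicate points; the normalization reduction and the derivative recovery through Proposition~\ref{thm:centraletheorem} are essentially bookkeeping once the limiting constant has been pinned down.
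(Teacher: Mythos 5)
Your proposal is correct and follows essentially the same route as the paper's own proof: normalize the wavelet by the factor $(-1)^\ell(2\pi i)^\ell/c$ so that Proposition~\ref{thm:centraletheorem} applies, use Lemma~\ref{lem:wavelettransformbandlimited} together with the RKHS property of $\mathrm{PW}_\Omega$ to upgrade the $L^2$-convergence of $a^{-\ell}\mathcal{W}_{\psi}f(\cdot,a)$ to pointwise convergence, deduce $|f^{(\ell)}|=|g^{(\ell)}|$ on $\R$, apply Lemma~\ref{lem:uniquenessmodulus} to get $f^{(\ell)}=\pm g^{(\ell)}$, and eliminate the resulting polynomial of degree at most $\ell-1$ by square-integrability. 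The only differences are cosmetic (you explicitly verify that $f^{(\ell)},g^{(\ell)}\in\mathrm{PW}_\Omega$ and that the renormalization scales the magnitudes by the harmless factor $|\kappa|$, points the paper leaves implicit).
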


\begin{proof}
    Let $f,g\in \text{PW}_{\Omega}$. It is clear that if $f=\pm g$, then $i)$ holds. Conversely, we suppose that $ii)$ holds. 
    Let us define
    \begin{equation*}
        \phi := \frac{(-1)^\ell(2\pi i)^\ell}{c}\psi.   
    \end{equation*}
    Then, we have that  
    \begin{equation*}
        \lim_{\xi\to0}\xi^{-\ell}\widehat \phi(\xi) = (-1)^\ell(2\pi i)^\ell.
    \end{equation*}
    By Proposition~\ref{thm:centraletheorem}, it follows that $a^{-\ell}\mathcal{W}_{\phi}f(\cdot,a)$ and $a^{-\ell}\mathcal{W}_{\phi}g(\cdot,a)$ converge pointwise to $f^{(\ell)}$ and $g^{(\ell)}$, respectively, as $a\to0^+$. Since the absolute value is a continuous function, $a^{-\ell}|\mathcal{W}_{\phi}f(\cdot,a)|$ and $a^{-\ell}|\mathcal{W}_{\phi}g(\cdot,a)|$ converge pointwise to $|f^{(\ell)}|$ and $|g^{(\ell)}|$, respectively, as $a\to0^+$. Combining this with item $i)$ implies that $|f^{(\ell)}(b)|=|g^{(\ell)}(b)|$, for every $b\in\R$, and employing Lemma~\ref{lem:uniquenessmodulus} we can conclude that $f^{(\ell)}=\pm g^{(\ell)}$.
    
    Together with the analyticity of $f$ and $g$ this implies
    \[
    f(x)\mp g(x)=P(x),\quad x\in\R,
    \]
    where $P$ is a polynomial of degree $\ell-1$. Now, if $P$ is not the null polynomial, then $f\mp g$ is not in $L^2(\R)$ and we have a contradiction. Therefore, $P\equiv 0$ and $f=\pm g$.
\end{proof}

\begin{oss}
\label{rem:rimas_counterexample}
It is worth observing that Theorem~\ref{thm:unquenessresultreal} does not hold for progressive wavelets, that is, for wavelets with only positive frequencies. Indeed, the hypothesis of Theorem~\ref{thm:unquenessresultreal} will always be violated since for all progressive wavelets $\psi$ and any $\ell \in \N$
\[
    \lim_{\xi \to 0^-} \xi^{-\ell} \widehat \psi (\xi) = 0.
\]
Actually, one can convince oneself that real-valued signals can never be uniquely determined up to global phase by the magnitude of their wavelet transform with respect to any progressive wavelet. Indeed, by the definition of the wavelet transform, it is immediate to observe that if $f, g\in L^2(\R)$ are such that $f_+=e^{i\alpha}g_+$ and $\psi$ is a progressive wavelet, then 
\begin{equation}\label{eq:equalitymagnitudewavelets}
    |\mathcal{W}_\psi f(b,a)|=|\mathcal{W}_\psi g(b,a)|,
\end{equation}
for all $b\in\R$ and $a\in\R_+$. Additionally, we can show that it is actually possible to construct real-valued signals that do not agree up to global phase even though their analytic representations do, and thus \eqref{eq:equalitymagnitudewavelets} is satisfied. To do so, we consider $f,g \in L^2(\R)$ as well as $\alpha\in\R$ and we suppose that
\begin{equation*}
    f_+=e^{i\alpha}g_+,
\end{equation*}
or equivalently 
\begin{equation*}
    \operatorname{Re} f_+=\text{Re}(e^{i\alpha}g_+),\quad \operatorname{Im} f_+=\text{Im}(e^{i\alpha}g_+).
\end{equation*}
We recall that the analytic representation $f_+$ of $f$ is given by
\begin{equation}\label{eq:analitycparthilberttransform}
 f_+(x)=f(x)+i(\mathscr{H}f)(x), 
\end{equation}
where $\mathscr{H} f$ denotes the Hilbert transform of $f$. 
By equation \eqref{eq:analitycparthilberttransform}, $\operatorname{Re} f_+=\text{Re}(e^{i\alpha}g_+)$ is equivalent to 
\begin{equation}\label{eq:fexpression}
 f=\cos{\alpha} \cdot g-\sin{\alpha} \cdot \mathscr{H}g
\end{equation}
and, analogously, $\operatorname{Im} f_+=\text{Im}(e^{i\alpha}g_+)$ is equivalent to 
\begin{equation}\label{eq:hilbertfexpression}
\mathscr{H}f=\cos{\alpha}\cdot  \mathscr{H}g+\sin{\alpha} \cdot g.  
\end{equation}
Furthermore, the property $\mathscr{H}(\mathscr{H}f)=-f$ implies that equations \eqref{eq:fexpression} and \eqref{eq:hilbertfexpression} are equivalent and thus, $f_+=e^{i\alpha}g_+$ if and only if $f$ takes the form \eqref{eq:fexpression}. Therefore, if we take a real-valued signal $g\in L^2(\R)$, as well as $\alpha\in\R$, and we define $f$ by \eqref{eq:fexpression}, then $f$ is real-valued and $f_+=e^{i\alpha}g_+$. However, if $\alpha$ is not a multiple of $\pi$, then $f$ and $g$ will in general not agree up to global phase.

We remark that if $\psi$ is a progressive wavelet, the wavelet phase retrieval problem continues to be not injective even if we allow the scale $a$ to vary over $\R\setminus\{0\}$. Indeed, in that case, we would have that
\[
\overline{\text{span}\{T_bD_a\psi\}}_{b\in\R, a\in\R_+}=\cH_+\quad\text{and}\quad\overline{\text{span}\{T_bD_a\psi\}}_{b\in\R, a\in\R_-}=\cH_-,
\]
where $\R_-=(-\infty,0)$ and $\mathcal{H}_-=\{f\in L^2(\R): \text{supp} \widehat{f}\subseteq\R_-\}$ . Therefore, the so-called complement property (CP), which is a necessary condition for the injectivity of the operator, see e.g. \cite{alaifari2017phase,balan2006signal,cahill2016phase},
\[
    \mathcal{A}_{\psi}f=(|\mathcal{W}_{\psi}f(b,a)|)_{\R\times\R^{\times}},\quad f\in L^2(\R)/\sim,
\] 
would not be satisfied. 
\end{oss}

\begin{oss}
    The proof of Theorem~\ref{thm:unquenessresultreal} can be applied to other spaces of entire functions which are real-valued on the real line. We mention the class of shift-invariant spaces with Gaussian generator: Let $\varphi_\gamma(t)=e^{-\gamma x^2}$, $\gamma>0$. The shift-invariant space $V^\infty(\varphi)$ generated by the Gaussian $\varphi_\gamma$ is defined as
    \[
    V^\infty(\varphi_\gamma)=\{f\in L^\infty(\R): f=\sum_{k\in\mathbb{Z}}c_k\varphi(\cdot-k),~c\in l^\infty(\mathbb{Z})\}.
    \]
    By \cite[Lemma 4.1]{grochenig2020sharp} every $f\in V^\infty(\varphi_\gamma)$ possesses an extension to an entire function satisfying the growth estimate \begin{equation*}
        |f(x+iy)|\lesssim e^{\gamma y^2}, \qquad x,y \in \R.
    \end{equation*}
    Phase retrieval in shift-invariant spaces has recently been studied in several papers \cite{grochenig2020phase,romero2021sign,grohs2020injectivity}.
\end{oss}

We now introduce the Paley--Wiener space of integrable, bandlimited functions  
\[
\text{PW}^1_{\Omega}=\{f\in L^1(\R):\text{supp}\widehat{f}\subseteq[-\Omega,\Omega]\}
\]
and we observe that $\text{PW}^1_{\Omega}\subseteq\text{PW}_{\Omega}$ for every $\Omega>0$. Furthermore, the following result holds:
\begin{prop}\label{prop:squareinpw}
Let $\Omega>0$ and $f\in {\rm PW}_{\Omega}$. Then, $|f|^2\in{\rm PW}^1_{2\Omega}$.
\end{prop}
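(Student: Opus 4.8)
The plan is to verify the two defining properties of $\text{PW}^1_{2\Omega}$ separately: that $|f|^2 \in L^1(\R)$ and that $\text{supp}\,\widehat{|f|^2} \subseteq [-2\Omega, 2\Omega]$. The first is immediate, since $\int_\R |f(x)|^2\,\D x = \|f\|_2^2 < \infty$ because $f \in \text{PW}_\Omega \subseteq L^2(\R)$. The whole content therefore lies in controlling the support of the Fourier transform of the product $|f|^2 = f\cdot\overline f$.

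To this end, I would first record that $\widehat f \in L^2(\R)$ has support contained in the compact interval $[-\Omega,\Omega]$, so by the Cauchy--Schwarz inequality $\widehat f \in L^1(\R)$ as well. Writing $|f|^2 = f \cdot \overline f$ with $f, \overline f \in L^2(\R)$, their product lies in $L^1(\R)$ and the convolution theorem for square-integrable functions applies in the form
\[
\widehat{|f|^2} = \widehat{f} * \widehat{\overline f},
\]
where the right-hand side is the convolution of two $L^2$-functions, hence a well-defined continuous function. The interchange of integrals needed to justify this identity is legitimate because $\widehat f \in L^1 \cap L^2$.

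Next I would compute $\widehat{\overline f}$. A direct change of variables gives $\widehat{\overline f}(\xi) = \overline{\widehat f(-\xi)}$ for $\xi \in \R$, first for $L^1$-functions and then for $L^2$-functions by density. Consequently $\widehat{\overline f}$ is nonzero only where $\widehat f(-\xi) \neq 0$, i.e. for $\xi \in [-\Omega,\Omega]$, so $\text{supp}\,\widehat{\overline f} \subseteq [-\Omega,\Omega]$ as well. Since the support of a convolution is contained in the (Minkowski) sum of the supports of the factors, we obtain
\[
\text{supp}\,\widehat{|f|^2} \subseteq [-\Omega,\Omega] + [-\Omega,\Omega] = [-2\Omega,2\Omega],
\]
which together with $|f|^2 \in L^1(\R)$ yields $|f|^2 \in \text{PW}^1_{2\Omega}$.

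The routine points are the $L^1$-membership and the explicit formula for $\widehat{\overline f}$; the step that needs the most care is the application of the convolution theorem to the product of two $L^2$-functions (which are not individually in $L^1$) together with the support-of-a-convolution estimate. I expect this to be the main obstacle, and the observation that $\widehat f$ has compact support and hence belongs to $L^1 \cap L^2$ is precisely what makes the convolution $\widehat f * \widehat{\overline f}$ and the associated Fubini argument harmless.
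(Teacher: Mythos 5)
Your proof is correct and follows essentially the same route as the paper's: write $|f|^2 = f\overline{f} \in L^1(\R)$, apply the convolution theorem to get $\widehat{|f|^2} = \widehat{f} * \widehat{\overline{f}}$ with $\widehat{\overline{f}}(\xi) = \overline{\widehat{f}(-\xi)}$, and bound the support of the convolution by the Minkowski sum $[-\Omega,\Omega]+[-\Omega,\Omega]$. The only difference is that you spell out the justification of the convolution theorem for $L^2$ factors (via $\widehat{f} \in L^1 \cap L^2$ from compact support), a point the paper leaves implicit.
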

\begin{proof}
We first note that $|f|^2=f\overline{f}\in L^1(\R)$ and by the convolution theorem
\[
(|f|^2)^\wedge=\widehat{f}*\widehat{\overline{f}}=\widehat{f}*\widehat{f}^\#.
\]
Then, since $\text{supp}\, (|f|^2)^\wedge\subseteq \text{supp}\,\widehat{f}+\text{supp}\,\widehat{f}^\#\subseteq[-2\Omega,2\Omega]$, we conclude that $|f|^2\in{\rm PW}^1_{2\Omega}$.
\end{proof}
Additionally, we will make use of the Whittaker--Shannon--Kotelnikov (WSK) sampling theorem in the following form:

\begin{teo}[WSK sampling theorem]
\label{thm:wsk}
Let $\Omega>0$ and $f\in{\rm PW}_{\Omega}$. Then, for every $x\in\R$
\[
f(x)=\sum_{m\in\mathbb{Z}}f\left(\frac{m}{2\Omega}\right){\rm sinc}(2\Omega x-m).
\]
\end{teo}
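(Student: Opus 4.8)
The plan is to expand $\widehat f$ in a Fourier series on the interval $[-\Omega,\Omega]$ and then recover $f$ by Fourier inversion, reading off the samples $f(m/(2\Omega))$ as the Fourier coefficients. Since $f\in\text{PW}_{\Omega}$, its Fourier transform $\widehat f$ lies in $L^2([-\Omega,\Omega])$, and I would work with the orthonormal basis $\{(2\Omega)^{-1/2}\E^{\pi i m\xi/\Omega}\}_{m\in\mathbb{Z}}$ of this space. Computing the coefficient $\langle\widehat f,(2\Omega)^{-1/2}\E^{\pi i m\xi/\Omega}\rangle$ and comparing it with the inversion formula $f(x)=\int_{-\Omega}^{\Omega}\widehat f(\xi)\E^{2\pi i\xi x}\D\xi$ evaluated at the sampling points, I would identify the coefficients with the values $f(-m/(2\Omega))$; after re-indexing this gives
\[
\widehat f(\xi)=\frac{1}{2\Omega}\sum_{m\in\mathbb{Z}}f\left(\frac{m}{2\Omega}\right)\E^{-\pi i m\xi/\Omega},\qquad \xi\in[-\Omega,\Omega],
\]
with convergence in $L^2([-\Omega,\Omega])$.

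I would then substitute this expansion into the inversion formula. Because $\xi\mapsto\E^{2\pi i\xi x}$ belongs to $L^2([-\Omega,\Omega])$, integration against it is a continuous linear functional, so passing to the limit of the finite partial sums justifies interchanging the series with the integral:
\[
f(x)=\frac{1}{2\Omega}\sum_{m\in\mathbb{Z}}f\left(\frac{m}{2\Omega}\right)\int_{-\Omega}^{\Omega}\E^{2\pi i\xi(x-m/(2\Omega))}\D\xi.
\]
A direct evaluation of the elementary integral gives $2\Omega\,{\rm sinc}(2\Omega x-m)$ (with the normalized convention ${\rm sinc}(t)=\sin(\pi t)/(\pi t)$), and the factor $2\Omega$ cancels, producing the claimed reconstruction formula.

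The point requiring the most care is that the theorem asserts pointwise convergence at \emph{every} $x\in\R$, whereas the Fourier-series step only delivers $L^2$ convergence. Here I would exploit the structure already recorded in Section~\ref{sec:preliminaries}: each partial sum $S_N(x)=\sum_{|m|\leq N}f(m/(2\Omega))\,{\rm sinc}(2\Omega x-m)$ lies in $\text{PW}_{\Omega}$, since the Fourier transform of ${\rm sinc}(2\Omega\,\cdot\,-m)$ is a modulated indicator of $[-\Omega,\Omega]$ and hence supported in $[-\Omega,\Omega]$, and $S_N\to f$ in $L^2(\R)$ by the computation above. As $\text{PW}_{\Omega}$ is a reproducing kernel Hilbert space, the evaluation operators $L_x$ are bounded, so $L^2$-convergence forces $S_N(x)\to f(x)$ for every $x\in\R$. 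This RKHS argument is the key step that upgrades the $L^2$ reconstruction to everywhere-pointwise convergence; the Fourier-series expansion and the sinc integral are otherwise routine.
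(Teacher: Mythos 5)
The paper never proves Theorem~\ref{thm:wsk}: it is recalled as a classical result and used as a black box, so there is no internal proof to compare against. Your argument is correct, and it is the standard proof: expand $\widehat f$ in the exponential orthonormal basis of $L^2([-\Omega,\Omega])$, identify the Fourier coefficients with the samples $f(m/(2\Omega))$ via the inversion formula, and evaluate the elementary integral to obtain the (normalized) sinc kernel. Two small observations. First, your interchange step already gives convergence at every fixed $x\in\R$, since for each $x$ integration against $\xi\mapsto \E^{2\pi i\xi x}$ is a continuous linear functional on $L^2([-\Omega,\Omega])$ applied to the $L^2$-convergent partial sums; the closing RKHS argument is therefore a second, equally valid route to pointwise convergence rather than a necessary repair. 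It is, however, a natural choice here, because it reuses exactly the machinery the paper sets up in Section~\ref{sec:preliminaries} (boundedness of the evaluation operators on $\mathrm{PW}_\Omega$) and mirrors how the paper itself upgrades $L^2$ limits to pointwise limits in the proofs of Theorems~\ref{thm:unquenessresultrealfirst} and~\ref{thm:unquenessresultreal}. Second, both routes implicitly use that the a.e.\ Fourier inversion identity holds at \emph{every} point once $f$ is identified with its continuous (entire) representative; this is covered by the Paley--Wiener identification recorded in the paper, but it deserves an explicit word since the theorem asserts equality for all $x\in\R$.
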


We may now state and prove the following result on sampled wavelet phase retrieval:

\begin{teo}\label{thm:unquenessresultrealsampled}
Let $\Omega>0$ and let $\psi\in L^1(\R)$ be such that 
\begin{equation*}
   \lim_{\xi\to0}\xi^{-\ell}\widehat{\psi}(\xi)=c\in\mathbb{C}\setminus\{0\}, 
\end{equation*}
for some $\ell\in\mathbb{N}$. Furthermore, let $(a_k)_{k\in\N}$ be a sequence in $\R_+$ such that $a_k\to0$ as $k\to\infty$.
Then, the following are equivalent for $f,g\in {\rm PW}_{\Omega}$ real-valued on the real line:
\begin{itemize}
\item[i)] $\left|\mathcal{W}_{\psi}f(\frac{m}{4\Omega},a_k)\right|=\left|\mathcal{W}_{\psi}g(\frac{m}{4\Omega},a_k)\right|,\quad m\in\mathbb{Z},\, k\in\N$;
\item[ii)] $f=\pm g$.
\end{itemize}
\end{teo}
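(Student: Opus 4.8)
The plan is to reduce this sampled statement to the continuous one of Theorem~\ref{thm:unquenessresultreal}. The implication ii) $\Rightarrow$ i) is immediate, so I focus on the converse. Exactly as in the proof of Theorem~\ref{thm:unquenessresultreal}, I would first normalise by setting
\[
\phi := \frac{(-1)^\ell(2\pi i)^\ell}{c}\,\psi,
\]
so that $\lim_{\xi\to0}\xi^{-\ell}\widehat{\phi}(\xi)=(-1)^\ell(2\pi i)^\ell$. Since $\phi$ is a scalar multiple of $\psi$, hypothesis i) transfers (the common constant factor $|(-1)^\ell(2\pi i)^\ell/c|$ cancels in the equality of magnitudes): we have $\left|\mathcal{W}_{\phi}f(\frac{m}{4\Omega},a_k)\right|=\left|\mathcal{W}_{\phi}g(\frac{m}{4\Omega},a_k)\right|$ for every $m\in\mathbb{Z}$ and $k\in\N$.

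The key new step is to upgrade these samples to an equality of the full modulus function at each fixed scale $a_k$. The modulus $|\mathcal{W}_{\phi}f(\cdot,a_k)|$ is itself not bandlimited, so no sampling theorem applies to it directly; however, its square is bandlimited. Indeed, by Lemma~\ref{lem:wavelettransformbandlimited} we have $\mathcal{W}_{\phi}f(\cdot,a_k)\in\text{PW}_{\Omega}$, and therefore Proposition~\ref{prop:squareinpw} yields $|\mathcal{W}_{\phi}f(\cdot,a_k)|^2\in\text{PW}^1_{2\Omega}\subseteq\text{PW}_{2\Omega}$. Applying the WSK sampling theorem (Theorem~\ref{thm:wsk}) at bandwidth $2\Omega$, for which the Nyquist grid is precisely $\frac{1}{4\Omega}\mathbb{Z}$, gives
\[
|\mathcal{W}_{\phi}f(b,a_k)|^2=\sum_{m\in\mathbb{Z}}\left|\mathcal{W}_{\phi}f\left(\tfrac{m}{4\Omega},a_k\right)\right|^2{\rm sinc}(4\Omega b-m),
\]
and identically for $g$. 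Since the samples coincide, the reconstructions coincide, and I obtain $|\mathcal{W}_{\phi}f(b,a_k)|=|\mathcal{W}_{\phi}g(b,a_k)|$ for every $b\in\R$ and every $k\in\N$. This is exactly why the sampling rate $\frac{1}{4\Omega}$ appears in the statement: it is the Nyquist rate of the squared wavelet-transform magnitude.

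From here the argument mirrors Theorem~\ref{thm:unquenessresultreal}. By Proposition~\ref{thm:centraletheorem}, $a_k^{-\ell}\mathcal{W}_{\phi}f(\cdot,a_k)$ and $a_k^{-\ell}\mathcal{W}_{\phi}g(\cdot,a_k)$ converge in $L^2(\R)$ to $f^{(\ell)}$ and $g^{(\ell)}$ as $k\to\infty$; since all these functions lie in the RKHS $\text{PW}_{\Omega}$ (the limits because $\widehat{f^{(\ell)}}(\xi)=(2\pi i\xi)^\ell\widehat{f}(\xi)$ is still supported in $[-\Omega,\Omega]$), the convergence is also pointwise, and continuity of the modulus gives $a_k^{-\ell}|\mathcal{W}_{\phi}f(b,a_k)|\to|f^{(\ell)}(b)|$ for every $b\in\R$, and similarly for $g$. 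Multiplying the full-scale equality above by $a_k^{-\ell}$ and letting $k\to\infty$ yields $|f^{(\ell)}(b)|=|g^{(\ell)}(b)|$ for all $b\in\R$, so Lemma~\ref{lem:uniquenessmodulus} gives $f^{(\ell)}=\pm g^{(\ell)}$. Integrating $\ell$ times and using analyticity, $f\mp g$ is a polynomial of degree $\ell-1$ which, being in $L^2(\R)$, must vanish; hence $f=\pm g$.

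I expect the only genuinely new difficulty to be the middle paragraph: the realisation that one must square the magnitude before sampling, together with the observation that the grid $\frac{1}{4\Omega}\mathbb{Z}$ is exactly the Nyquist grid for $\text{PW}_{2\Omega}$. Once this brings the problem back to a continuous-in-$b$ equality at the scales $a_k$, the remainder is a routine repetition of the proof of Theorem~\ref{thm:unquenessresultreal}.
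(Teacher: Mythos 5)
Your proof is correct and relies on essentially the same ingredients as the paper's: the normalisation $\phi=\frac{(-1)^\ell(2\pi i)^\ell}{c}\psi$, bandlimitedness of the squared modulus (Proposition~\ref{prop:squareinpw}), the WSK sampling theorem (Theorem~\ref{thm:wsk}) on the Nyquist grid $\frac{1}{4\Omega}\mathbb{Z}$ for $\mathrm{PW}_{2\Omega}$, and the limiting argument of Theorem~\ref{thm:unquenessresultreal} followed by Lemma~\ref{lem:uniquenessmodulus} and the polynomial argument. The only (immaterial) difference is the order of the two interchanges: the paper first lets $k\to\infty$ at the sample points to get $\left|f^{(\ell)}\left(\frac{m}{4\Omega}\right)\right|=\left|g^{(\ell)}\left(\frac{m}{4\Omega}\right)\right|$ and then applies WSK to $|f^{(\ell)}|^2\in\mathrm{PW}_{2\Omega}$, whereas you apply WSK at each fixed scale $a_k$ to $|\mathcal{W}_\phi f(\cdot,a_k)|^2$ and then let $k\to\infty$ pointwise in $b$; both routes are valid.
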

\begin{proof}
Let $f,g\in \text{PW}_{\Omega}$. It is clear that if $f=\pm g$, then $i)$ holds. Conversely, assume that $i)$ is true. Setting 
\begin{equation*}
 \phi=\frac{(-1)^\ell(2\pi i)^\ell}{c}\psi   
\end{equation*}
and following the same argument as in the proof of Theorem~\ref{thm:unquenessresultreal}, we can establish that $a_k^{-2\ell}|\mathcal{W}_{\phi}f(\frac{m}{4\Omega},a_k)|^2$ and $a_k^{-2\ell}|\mathcal{W}_{\phi}g(\frac{m}{4\Omega},a_k)|^2$ converge to $|f^{(\ell)}(\frac{m}{4\Omega})|^2$ and $|g^{(\ell)}(\frac{m}{4\Omega})|^2$, respectively, for every $m\in\mathbb{Z}$ as $k\to\infty$. Then, since $i)$ holds, we have that $$\left|f^{(\ell)}\left(\frac{m}{4\Omega}\right)\right|^2=\left|g^{(\ell)}\left(\frac{m}{4\Omega}\right)\right|^2, \quad \mbox{for all } m \in \mathbb{Z}.$$ Furthermore, by Proposition~\ref{prop:squareinpw}, we know that $|f^{(\ell)}|^2$ and $|g^{(\ell)}|^2$ belong to ${\rm PW}^1_{2\Omega}\subseteq {\rm PW}_{2\Omega}$. Thus, by the WSK sampling theorem it follows that
$$\left|f^{(\ell)}(x)\right|^2=\left|g^{(\ell)}(x)\right|^2, \quad \mbox{for all } x \in \R,$$
and consequently $|f^{(\ell)}(x)|=|g^{(\ell)}(x)|$  for all $x\in\R$. Finally, as in the proof of Theorem~\ref{thm:unquenessresultreal}, we can conclude that $f=\pm g$.
\end{proof}

\begin{oss}
More generally, we could replace the sampling set $(m/4\Omega)_{m\in\mathbb{Z}}$ in Theorem~\ref{thm:unquenessresultrealsampled} with any other sampling sequence for ${\rm PW}_{2\Omega}$. For instance, we could choose the sequence $(bm)_{m\in\mathbb{Z}}$ with $0<b\leq 1/4\Omega$. We refer to \cite{bruna2001sampling,seip2004interpolation} for a characterization of sampling sequences in terms of density properties.
\end{oss}

To make Theorem~\ref{thm:unquenessresultrealsampled} more palpable, we want to give a concrete example of a sampling set for which Theorem \ref{thm:unquenessresultrealsampled} implies uniqueness. To this end, we start recalling the definition of a wavelet system. Let $\psi\in L^1(\mathbb{R})$, and let $a>1$, $b>0$ be fixed. The set
\begin{equation}\label{eq:waveletsystem}
\mathcal{W}(\psi, a, b)=\{D_{a^n}T_{bm} \psi \}_{m,n\in\mathbb{Z}}=\{a^{-n}\psi(a^{-n}x-bm)\}_{m,n\in\mathbb{Z}}
\end{equation}
is called a \emph{wavelet system} with generator $\psi$ and parameters $a,b$. We observe that 
\[
\mathcal{W}_\psi f(a^{-n}bm,a^{-n})=\langle f , D_{a^n}T_{bm} \psi \rangle,\qquad m\in\mathbb{Z}, n\in\mathbb{N}.
\]
A typical choice for the parameters is the \emph{dyadic wavelet system} $\mathcal{W}(\psi, 2, 1)$. We refer to \cite[Chapter 12]{heil2011basis} and \cite[Chapter 3]{daub92} as classical references on wavelet systems and frames. Our next result follows from Theorem~\ref{thm:unquenessresultrealsampled} and shows that real-valued bandlimited signals can be uniquely recovered up to global sign from the absolute values of the wavelet coefficients with the wavelet system \eqref{eq:waveletsystem} for every choice of the parameters $a>1$ and $b>0$.
\begin{teo}[Cf.~Theorem \ref{thm:mainthmII}]
\label{thm:unquenessresultrealsampled2}
    Let $a > 1$, $b > 0$ and let $\psi \in L^1(\R)$ be a wavelet such that 
    \begin{equation*}
   \lim_{\xi\to0}\xi^{-\ell}\widehat{\psi}(\xi)=c\in\mathbb{C}\setminus\{0\}, 
\end{equation*}
for some $\ell\in\mathbb{N}$. Then, any real-valued bandlimited function $f \in L^2(\R)$ is uniquely determined  up to global sign by the phaseless wavelet coefficients
\[
\{\lvert \langle f , D_{a^n}T_{bm} \psi \rangle \rvert: m\in\mathbb{Z}, n\in\mathbb{N}\}.
\]
\end{teo}

\begin{proof}
    Let $f,g$ be real-valued bandlimited functions. Then, there exists $\Omega>0$ such that $f,g\in {\rm PW}_{\Omega}$. We suppose that 
    \[
    |\mathcal{W}_\psi f(a^{-n}bm,a^{-n})|^2=|\mathcal{W}_\psi g(a^{-n}bm,a^{-n})|^2,\qquad n\in\N,\ m\in\mathbb{Z},
    \]
    which correspond exactly to the squared magnitudes of the wavelet coefficients with wavelet system $\mathcal{W}(\psi, a, b)$. By Lemma~\ref{lem:wavelettransformbandlimited} together with Proposition~\ref{prop:squareinpw}, we know that $|\mathcal{W}_\psi f(\cdot,a^{-n})|^2\in {\rm PW}_{2\Omega}^1$, for every $n\in\mathbb{N}$. Furthermore, for every choice of the parameters $a > 1$ and $b > 0$, there exists $N\in\mathbb{N}$ such that for all $n > N$ it holds that $a^{-n}b<1/4\Omega$. Therefore, the WSK sampling theorem implies that 
    \[
    |\mathcal{W}_\psi f(b,a^{-n})|^2=|\mathcal{W}_\psi g(b,a^{-n})|^2,\qquad n > N,\ b\in\mathbb{R}. 
    \]
We can finally apply Theorem~\ref{thm:unquenessresultrealsampled} to conclude that $f$ coincides with $g$ up to global sign.
\end{proof}

\begin{oss}
    In view of Theorem~\ref{thm:unquenessresultrealsampled}, we observe that we could restrict the set of magnitude measurements to arbitrary fine scales, i.e. $a^{-n}$ with $n\geq N$ for every fixed $N\in\mathbb{N}$. 
\end{oss}

\section{Examples}\label{sec:examples}

\subsection{The Morlet wavelet}


Let $\xi_0\in\R\setminus\{0\}$. The Morlet wavelet, also known as the Gabor wavelet, is at the origin of the development of wavelet analysis. 
It was introduced by Grossmann and Morlet in \cite{grossmorlet84}. 
It is defined on the frequency side by the function
\[
\widehat{\psi}(\xi)=\pi^{-\frac{1}{4}}[e^{-(\xi-\xi_0)^2/2}-e^{-\xi^2/2}e^{-\xi_0^2/2}],\quad \xi\in\R.
\]
Its Fourier transform is a shifted Gaussian adjusted with a corrective term in order to have $\widehat{\psi}(0)=0$. The Morlet wavelet
\[
\psi(x)=\pi^{-\frac{1}{4}}[e^{i\xi_0x}-e^{-\xi_0^2/2}]e^{-x^2/2},\qquad x\in\R,
\]
is complex-valued but widely used for applications that involve only real-valued signals. By a direct computation, the Fourier transform $\widehat{\psi}(\xi)$ goes to zero as $\xi\to0$ with infinitesimal order 1. Indeed, using a Taylor expansion yields
\begin{align*}
\lim_{\xi\to0}\frac{\widehat{\psi}(\xi)}{\xi}&=\lim_{\xi\to0}\frac{\pi^{-\frac{1}{4}}[e^{-(\xi-\xi_0)^2/2}-e^{-\xi^2/2}e^{-\xi_0^2/2}]}{\xi}\\
&=\lim_{\xi\to0}\frac{\pi^{-\frac{1}{4}}[e^{-\xi_0^2/2}+\xi_0e^{-\xi_0^2/2}\xi-e^{-\xi_0^2/2}+e^{-\xi_0^2/2}\xi^2/2+o(\xi)]}{\xi}\\
&=\lim_{\xi\to0}\frac{\pi^{-\frac{1}{4}}[\xi_0e^{-\xi_0^2/2}\xi+o(\xi)]}{\xi}=\pi^{-\frac{1}{4}}\xi_0e^{-\xi_0^2/2}.
\end{align*}
Thus, $\psi$ satisfies the hypothesis of Theorem~\ref{thm:unquenessresultreal} with $\ell=1$. Therefore, all real-valued $f\in\text{PW}_{\Omega}$ can be recovered up to global sign from the
measurements $|\mathcal{W}_{\psi}f(b,a)|$, for $b\in\R$, $a\in\R_+$. Furthermore, by 
Theorem~\ref{thm:unquenessresultrealsampled2} we know that it is enough to know the magnitude of the wavelet transform $\mathcal{W}_\psi f$ for 
the samples $\{(a^{-n}bm,a^{-n}):m\in\mathbb{Z},\, n\in\N\}$.

\begin{figure}
    \centering
        \begin{tikzpicture}[scale=0.7]
        \begin{axis}[height=9cm, axis lines = middle]
            \addplot[domain=-3:3, samples=200, thick] {pi^(-1/4)*(cos(deg(5*x))-exp(-25/2))*exp(-x^2/2)};
            \addlegendentry{Re($\psi$)}
            \addplot[domain=-3:3, samples=200, dashed, thick] {pi^(-1/4)*sin(deg(5*x))*exp(-x^2/2)};
            \addlegendentry{Im($\psi$)}
        \end{axis}
    \end{tikzpicture}
     \qquad
    \begin{tikzpicture}[scale=0.7]
        \begin{axis}[ymin = -0.5, height=9cm, axis lines = middle]
            \addplot[domain=-3:10, samples=200, thick] {pi^(-1/4)*(exp(-(x-5)^2/2)-exp(-x^2/2)*exp(-25/2))};
            \addlegendentry{$\widehat{\psi}$}
        \end{axis}
    \end{tikzpicture}
    \caption{The Morlet wavelet $\psi$ for $\xi_0=5$ in time and frequency representation. Observe that the Fourier transform of $\psi$ is not identically zero on the negative frequencies but it is numerically small.}
    \label{fig:fandg1}
\end{figure}

\subsection{The linear-chirp wavelet}

Another example of a complex-valued wavelet that satisfies our hypothesis is the linear-chirp wavelet, also called the chirplet. The idea to use chirps as wavelets was introduced in \cite{manhaykin91,manhaykin95}. We also refer to \cite{hol95} for a concise presentation. Let $\xi_0,\, \beta\in\R$. The chirplet is defined by windowing a linear chirp with a Gaussian: 
\[
\psi(x)=e^{i(\xi_0+\beta x/2)x}e^{-x^2/2}+\eta(x).
\]
Again, the corrective term $\eta$ is added in order to have zero mean.
Its Fourier transform is given by
\[
\widehat{\psi}(\xi)=\sqrt{\frac{2\pi}{1-i\beta}}e^{-(\xi-\xi_0)^2/2(1-i\beta)}+\widehat{\eta}(\xi).
\]
For instance, we may set 
\[
\widehat{\eta}(\xi)=-\sqrt{\frac{2\pi}{1-i\beta}} e^{-\xi_0^2/2(1-i\beta)}e^{-\xi^2/2}
\]
and using a Taylor expansion, we obtain
\begin{align*}
\lim_{\xi\to0}\frac{\widehat{\psi}(\xi)}{\xi}&=\lim_{\xi\to0}\sqrt{\frac{2\pi}{1-i\beta}}\frac{e^{-(\xi-\xi_0)^2/2(1-i\beta)}-e^{-\xi_0^2/2(1-i\beta)}e^{-\xi^2/2}}{\xi}\\
&=\lim_{\xi\to0}\sqrt{\frac{2\pi}{1-i\beta}}\frac{e^{-\xi_0^2/2(1-i\beta)}\xi_0\xi/(1-i\beta)+o(\xi)}{\xi}
=\frac{\sqrt{2\pi}}{(1-i\beta)^{3/2}}e^{-\xi_0^2/2(1-i\beta)}\xi_0.
\end{align*}
 This shows that $\psi$ satisfies the hypothesis of Theorems~\ref{thm:unquenessresultreal}, \ref{thm:unquenessresultrealsampled} and \ref{thm:unquenessresultrealsampled2} with $\ell=1$.
 
 \begin{figure}
    \centering
    \begin{tikzpicture}[scale=0.7]
        \begin{axis}[height=9cm, axis lines = middle]
            \addplot[domain=-3:3, samples=200, thick] {cos(deg(5*x+x^2/2))*exp(-x^2/2)-pi^(1/2)*2^(1/4)*exp(-x^2/2)*exp(-25/4)*(cos(deg(pi/8))*cos(deg(-25/4))-sin(deg(pi/8))*sin(deg(-25/4)))};
            \addlegendentry{Re($\psi$)}
            \addplot[domain=-3:3, samples=200, dashed, thick] {sin(deg(5*x+x^2/2))*exp(-x^2/2)-pi^(1/2)*2^(1/4)*exp(-x^2/2)*exp(-25/4)*(cos(deg(pi/8))*sin(deg(-25/4))+sin(deg(pi/8))*cos(deg(-25/4)))};
            \addlegendentry{Im($\psi$)}
        \end{axis}
    \end{tikzpicture}
\qquad
    \begin{tikzpicture}[scale=0.7]
        \begin{axis}[ymin = -1, height=9cm, axis lines = middle]
            \addplot[domain=-3:10, samples=200, thick] {pi^(1/2)*2^(1/4)*exp(-(x-5)^2/4)*(cos(deg(pi/8))*cos(deg(-(x-5)^2/4))-sin(deg(pi/8))*sin(deg(-(x-5)^2/4)))-pi^(1/2)*2^(1/4)*exp(-x^2/2)*exp(-25/4)*(cos(deg(pi/8))*cos(deg(-25/4))-sin(deg(pi/8))*sin(deg(-25/4)))};
            \addlegendentry{Re($\widehat{\psi}$)}
            \addplot[domain=-3:10, samples=200, dashed, thick] {pi^(1/2)*2^(1/4)*exp(-(x-5)^2/4)*(cos(deg(pi/8))*sin(deg(-(x-5)^2/4))+sin(deg(pi/8))*cos(deg(-(x-5)^2/4)))-pi^(1/2)*2^(1/4)*exp(-x^2/2)*exp(-25/4)*(cos(deg(pi/8))*sin(deg(-25/4))+sin(deg(pi/8))*cos(deg(-25/4)))};
            \addlegendentry{Im($\widehat{\psi}$)}
        \end{axis}
    \end{tikzpicture}
    \caption{The linear-chirp wavelet for $\beta=1$ and $\xi_0=5$ in time and frequency representation.}
    \label{fig:fandg3}
\end{figure}

\section{Sampling Cauchy wavelet transform magnitudes}
\label{sec:cwt}




\subsection{Introduction}
Our main uniqueness result for phase retrieval from wavelet magnitude samples (Theorem
\ref{thm:unquenessresultrealsampled}) is not applicable to so-called progressive
wavelets which are wavelets that only have positive frequencies. 

This observation is not surprising in light of the fact that real-valued signals $f$ are not uniquely determined (up to global phase) by wavelet transform magnitude measurements $\lvert \cW_\psi f \rvert$ for progressive wavelets $\psi$ (see Remark \ref{rem:rimas_counterexample} in Section \ref{sec:main_results}). It does, however, raise the following question: 
\begin{itemize}
	\item[(Q)] Is there a class of signals which can be recovered (up to global phase) from wavelet transform magnitude measurements with progressive mother wavelets?
\end{itemize}
In general, this question is hard to answer. An elegant partial answer is however given in \cite{mallat2015phase}.

The authors of \cite{mallat2015phase} consider the so-called \emph{Cauchy wavelet} given by
\begin{equation}\label{eq:cauchywavelet}
    \widehat \psi(\xi) = \rho(\xi) \xi^p \mathrm{e}^{-\xi} \boldsymbol{1}_{\xi > 0}, \qquad \xi \in \R,
\end{equation}
where $p > 0$ and $\rho \in L^\infty(\R)$ is such that $\rho(a\xi) = \rho(\xi)$, for
a.e.~$\xi \in \R$, and $\rho(\xi) \neq 0$, for all $\xi \in \R$. Using tools from the theory of entire functions, they show that the class of analytic signals may be recovered uniquely (up to global phase) from the magnitude of the Cauchy wavelet transform. Analytic signals are functions $f \in L^2(\R)$ which have no negative frequencies. To be precise, they show the following theorem.
\begin{figure}
    \centering
    \begin{tikzpicture}[scale=0.7]
        \begin{axis}[ymin = -0.1, height=9cm, axis lines = middle]
            \addplot[domain=0:10, samples=200, thick] {x^2*exp(-x)};
            \addlegendentry{$\widehat{\psi}$}
        \end{axis}
    \end{tikzpicture}
    \caption{The Cauchy wavelet $\widehat \psi(\xi) =\xi^2 \mathrm{e}^{-\xi} \boldsymbol{1}_{\xi > 0}$.}
    \label{fig:fandg5}
\end{figure}

\begin{teo}[Corollary 2.2 in \cite{mallat2015phase}, p.~1259]
    \label{thm:mallatsemidiscretesampling}
    Let $a > 1$ and let $\psi$ be the Cauchy wavelet defined as in equation
    \eqref{eq:cauchywavelet}. Let, moreover, $f,g \in L^2(\R)$ be such that for some $j,k \in
    \mathbb{Z}$, with $j \neq k$,
    \[
        \left\lvert \cW_\psi f\left(\cdot,a^j\right) \right\rvert
            = \left\lvert \cW_\psi g\left(\cdot,a^j\right) \right\rvert
        \qquad \mbox{and} \qquad
        \left\lvert \cW_\psi f\left(\cdot,a^k\right) \right\rvert
            = \left\lvert \cW_\psi g\left(\cdot,a^k\right) \right\rvert.
    \]
    We denote by $f_+$ and $g_+$ the analytic representations of $f$ and $g$ which are defined through 
    the equations 
    \[
        \widehat{f_+}(\xi) = 2 \widehat f (\xi) \boldsymbol{1}_{\xi > 0} \qquad \mbox{and} \qquad
        \widehat{g_+}(\xi) = 2 \widehat g (\xi) \boldsymbol{1}_{\xi > 0},
    \]
    for $\xi \in \R$. Then, there exists an $\alpha \in \R$ such that 
    \[
        f_+ = \mathrm{e}^{\mathrm{i} \alpha} g_+.
    \]
\end{teo}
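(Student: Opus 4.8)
The plan is to reduce the statement to a question about a single holomorphic function on the upper half-plane $\C_+ := \{z \in \C : \operatorname{Im} z > 0\}$. Using $(\psi_a^\#)^\wedge(\xi) = \overline{\widehat\psi(a\xi)}$, the scale-invariance $\rho(a\xi) = \rho(\xi)$, and $\widehat{f_+}(\xi) = 2\widehat f(\xi)\boldsymbol{1}_{\xi>0}$, I first compute
\[
\widehat{\cW_\psi f(\cdot, a)}(\xi) = \frac{a^p}{2}\,\widehat{f_+}(\xi)\,\overline{\rho(\xi)}\,\xi^p\,\E^{-a\xi}\boldsymbol{1}_{\xi>0}, \qquad a \in \R_+.
\]
Since $\E^{-a\xi}\E^{2\pi i b\xi} = \E^{2\pi i (b + i a/(2\pi))\xi}$, Fourier inversion gives $\cW_\psi f(b,a) = \frac{a^p}{2}\, F(b + i a/(2\pi))$, where
\[
F(z) := \int_0^\infty \widehat{f_+}(\xi)\,\overline{\rho(\xi)}\,\xi^p\,\E^{2\pi i z \xi}\,\D\xi
\]
is holomorphic on $\C_+$ (absolute convergence for $\operatorname{Im} z > 0$ follows from Cauchy--Schwarz, and one differentiates under the integral). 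Thus the whole Cauchy wavelet transform of $f$ is encoded in one holomorphic function $F$ on $\C_+$; define $G$ analogously from $g$. As $a^p/2 > 0$ at fixed scale, the two hypotheses become
\[
|F| = |G| \text{ on } \operatorname{Im} z = t_1, \qquad |F| = |G| \text{ on } \operatorname{Im} z = t_2,
\]
with $t_1 = a^j/(2\pi)$ and $t_2 = a^k/(2\pi)$ distinct because $a>1$ and $j \neq k$. If I can prove $F = \E^{i\alpha} G$ on $\C_+$, then dividing by the nowhere-vanishing weight $\overline{\rho(\xi)}\xi^p$ and inverting the Fourier transform yields $\widehat{f_+} = \E^{i\alpha}\widehat{g_+}$, i.e.\ $f_+ = \E^{i\alpha} g_+$. (If $g_+ = 0$ the conclusion is trivial, so assume $F, G \not\equiv 0$.)

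Next I would exploit the two modulus identities by Schwarz reflection. For a horizontal line $\operatorname{Im} z = t$, the reflected function $R_t[F](z) := \overline{F(\bar z + 2it)}$ is holomorphic on $\{\operatorname{Im} z < 2t\}$ and equals $\overline{F}$ on the line itself, so there $|F|^2 = |G|^2$ reads $F\,R_t[F] = G\,R_t[G]$. Both sides are holomorphic on the strip $0 < \operatorname{Im} z < 2t$ and agree along a line, hence agree on the whole strip by the identity theorem. Applying this with $t_1$ and $t_2$ and dividing the two identities, I find that $H := F/G$ satisfies $H(w + 2it_1) = H(w + 2it_2)$, i.e.\ $H$ is meromorphic with the purely imaginary period $\tau = 2i(t_2 - t_1)$, while $|H| = 1$ on $\operatorname{Im} z = t_1, t_2$ and hence, by periodicity, on every line $\operatorname{Im} z = t_1 + m(t_2-t_1)$, $m \in \mathbb{Z}$. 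It is precisely the presence of two distinct scales that produces this single period.

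It then remains to show that a meromorphic function which is $\tau$-periodic and has unit modulus on these equally spaced lines is a unimodular constant. I would pass to the coordinate $\zeta = \E^{\pi z/(t_2 - t_1)}$, which collapses the period, so that $H$ descends to a meromorphic function $\Psi$ on $\C \setminus \{0\}$ with $|\Psi| = 1$ on a line through the origin; since $F$ and $G$ tend to $0$ as $\operatorname{Re} z \to \pm\infty$ (Riemann--Lebesgue), the behaviour of $\Psi$ at $0$ and $\infty$ is controlled. A Schwarz-reflection/Liouville argument across that line then forces $\Psi$, and hence $H$, to be constant, giving $F = \E^{i\alpha} G$ and therefore $f_+ = \E^{i\alpha} g_+$.

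The hard part is this last step. The reflection identity $\overline{H(\bar z + 2it_1)} = 1/H(z)$ shows that a pole of $H$ inside the strip $t_1 < \operatorname{Im} z < t_2$---a zero of $G$ not cancelled by $F$---is matched by a zero of $H$ \emph{outside} the strip, so one cannot apply the maximum modulus principle directly to deduce $|H| \equiv 1$. Excluding such zeros and poles and establishing the sharp decay of $F$ and $G$---so that $\Psi$ is genuinely rational and then, being unimodular on a line with prescribed behaviour at $0$ and $\infty$, constant---is where the theory of entire functions (Hadamard factorisation) and the precise Cauchy weight $\xi^p\E^{-\xi}$ are indispensable. This is the content of the main theorem of \cite{mallat2015phase}, of which the present statement is a corollary.
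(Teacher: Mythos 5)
The first thing to note is that the paper does not prove this statement at all: it is imported verbatim as Corollary~2.2 of \cite{mallat2015phase}, so there is no internal proof to compare against; your attempt can only be judged on its own terms and against the original source. As far as it goes, your reduction is sound and reproduces the opening strategy of that source: the identity $\cW_\psi f(b,a^j)=\tfrac{a^{jp}}{2}F\bigl(b+\mathrm{i}a^j/(2\pi)\bigr)$ with $F$ holomorphic on the upper half-plane (note that this uses $\rho(a^j\xi)=\rho(\xi)$, which holds by iterating the assumed $a$-invariance, so it is precisely the restriction to the scales $a^j,a^k$ that makes the computation legitimate), the reflection identities $F\,R_t[F]=G\,R_t[G]$ on strips, and the resulting $2\mathrm{i}(t_2-t_1)$-periodicity of $H=F/G$ are all correct.

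The genuine gap is the final step, and it is not a patchable technicality: the properties you have established for $H$ simply do not imply that $H$ is constant, so no reflection-plus-Liouville argument can close the proof from where you stand. Riemann--Lebesgue controls $F$ and $G$ individually along horizontal lines but says nothing about the ratio $F/G$, so the behaviour of $\Psi$ at $0$ and $\infty$ is not controlled by it. Concretely, the non-constant function
\[
H_0(z)=\frac{\mathrm{e}^{\pi(z-\mathrm{i}t_1)/(t_2-t_1)}-\mathrm{i}}{\mathrm{e}^{\pi(z-\mathrm{i}t_1)/(t_2-t_1)}+\mathrm{i}}
\]
is meromorphic, has period $2\mathrm{i}(t_2-t_1)$, satisfies $\lvert H_0\rvert=1$ on every line $\operatorname{Im}z=t_1+m(t_2-t_1)$, $m\in\mathbb{Z}$, and even obeys both reflection identities $\overline{H_0(\bar z+2\mathrm{i}t_1)}=1/H_0(z)=\overline{H_0(\bar z+2\mathrm{i}t_2)}$: it is the pull-back of the M\"obius map $\zeta\mapsto(\zeta-\mathrm{i})/(\zeta+\mathrm{i})$, unimodular on the real axis, under $\zeta=\mathrm{e}^{\pi(z-\mathrm{i}t_1)/(t_2-t_1)}$. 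Ruling out exactly such Blaschke-type factors --- i.e., showing they cannot arise as ratios of functions of the special form $\int_0^\infty\widehat{f_+}(\xi)\overline{\rho(\xi)}\,\xi^p\,\mathrm{e}^{2\pi\mathrm{i}z\xi}\,\mathrm{d}\xi$ --- is the entire content of the main theorem of \cite{mallat2015phase}, which your write-up explicitly defers to. So what you have is a correct and honest reduction of the corollary to the hard theorem of the cited paper (which is, in effect, also all the paper itself does, since it cites the corollary wholesale), but it is not a proof of the statement, and the route you sketch for the missing step would not complete into one.
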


Note that the above is more than a simple uniqueness theorem for phase retrieval from Cauchy wavelet transform magnitude measurements of analytic signals. It is, in fact, a uniqueness result for the \emph{semi-discrete} wavelet frame. Even more, the Cauchy wavelet magnitudes are assumed to agree on two scales only. It does therefore stand to reason that further restricting the signal class to analytic bandlimited signals should allow us to come up with a full sampling result for the Cauchy wavelet transform.

In the following, we will assume that the function $\rho \in L^\infty(\R)$ used in the definition of the Cauchy wavelet $\psi$ is such that $\psi \in L^1(\R)$. This is a natural assumption as mother wavelets are usually assumed to be in $L^1(\R)$. Moreover, there is a wide variety of $\rho \in L^\infty(\R)$ which satisfy this assumption (see Remark \ref{rem:rima_cwl1}). We want to stress, however, that this assumption is not necessary for our arguments to work and is made purely to simplify the mathematical exposition. Indeed, by the definition of the Cauchy wavelet \eqref{eq:cauchywavelet}, one can see immediately that $\psi \in L^2 \cap L^\infty(\R)$. Therefore, we may replace our subsequent use of the WSK sampling theorem by the use of classical sampling theory in the Bernstein space $B_{2\Omega}$ to obtain a sampling result for more general $\rho \in L^\infty(\R)$ at a slightly finer sampling density in frequency. 

\begin{oss}
\label{rem:rima_cwl1}
    One can show that if $\rho \in L^\infty(\R)$ is continuous and satisfies 
    \begin{equation*}
        \lvert \rho'(\xi) \rvert \lesssim \mathrm{e}^{\xi/2} \qquad \mbox{and}
        \qquad \lvert \rho''(\xi) \rvert \lesssim \mathrm{e}^{\xi/2},
    \end{equation*}
    for all $\xi \in \R$, then the Cauchy wavelet $\psi$ defined by \eqref{eq:cauchywavelet} is in $L^1(\R)$. In particular, if $\rho$ is a constant function, then $\psi \in L^1(\mathbb{R})$.
\end{oss}

\subsection{The sampling result for analytic signals}

We remind the reader of two pertinent results stated earlier in this manuscript: First, the wavelet transform $\cW_\psi f(\cdot,a)$ of a bandlimited signal $f$ is bandlimited itself, for $a \in \R_+$ (see Lemma \ref{lem:wavelettransformbandlimited}). Secondly, bandlimitedness carries over from any function to its squared absolute value. These two insights combined yield the following corollary. 

\begin{cor}
    \label{cor:objectisinpaleywienerspace}
    Let $\Omega > 0$. If $f \in \mathrm{PW}_\Omega$ and $\psi \in L^1(\R)$, then
    $\lvert \cW_\psi f(\cdot,a) \rvert^2 \in \mathrm{PW}_{2\Omega}^1 \subset \mathrm{PW}_{2\Omega}$, for all $a \in \R_+$.
\end{cor}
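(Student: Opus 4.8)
The plan is to observe that this corollary is an immediate consequence of two results already established, namely Lemma~\ref{lem:wavelettransformbandlimited} and Proposition~\ref{prop:squareinpw}, simply chained together. First I would fix an arbitrary scale $a \in \R_+$ and regard $\cW_\psi f(\cdot, a)$ as a single function of the translation variable $b$. Since $f \in \mathrm{PW}_\Omega$ and $\psi \in L^1(\R)$, Lemma~\ref{lem:wavelettransformbandlimited} directly yields $\cW_\psi f(\cdot, a) \in \mathrm{PW}_\Omega$.

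Next I would apply Proposition~\ref{prop:squareinpw}, which asserts that the squared modulus of any $\mathrm{PW}_\Omega$ function lies in $\mathrm{PW}^1_{2\Omega}$, to the function $\cW_\psi f(\cdot, a)$ in place of $f$. This is legitimate precisely because the previous step guarantees $\cW_\psi f(\cdot, a) \in \mathrm{PW}_\Omega$, so the proposition applies verbatim and gives $\lvert \cW_\psi f(\cdot, a) \rvert^2 \in \mathrm{PW}^1_{2\Omega}$. The final inclusion $\mathrm{PW}^1_{2\Omega} \subset \mathrm{PW}_{2\Omega}$ was already recorded in the text immediately after the definition of the $L^1$ Paley--Wiener space, and since $a$ was arbitrary the conclusion holds for every $a \in \R_+$.

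Since both ingredients are already in hand, there is essentially no obstacle here: the only point requiring any care is the bookkeeping of the bandwidth, i.e.\ checking that the support-doubling in Proposition~\ref{prop:squareinpw} sends $\mathrm{PW}_\Omega$ to $\mathrm{PW}^1_{2\Omega}$ and not to some other bandwidth. This is exactly what was verified in the proof of Proposition~\ref{prop:squareinpw}, where one uses that the Fourier support of $\lvert h \rvert^2 = h \overline{h}$ is contained in $\supp \widehat{h} + \supp \widehat{h}^\#$, which for $h \in \mathrm{PW}_\Omega$ lies in $[-2\Omega, 2\Omega]$. As the corollary is merely the composition of these two facts, the proof would amount to one or two sentences.
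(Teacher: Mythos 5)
Your proof is correct and is precisely the paper's own argument: the corollary is obtained by chaining Lemma~\ref{lem:wavelettransformbandlimited} (so that $\cW_\psi f(\cdot,a)\in\mathrm{PW}_\Omega$) with Proposition~\ref{prop:squareinpw} (so that its squared modulus lies in $\mathrm{PW}^1_{2\Omega}\subset\mathrm{PW}_{2\Omega}$). Nothing further is needed.
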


What remains is to combine Theorem \ref{thm:mallatsemidiscretesampling} with the classical WSK sampling theorem (Theorem \ref{thm:wsk}). Thereby, we obtain the following sampling result for the recovery of analytic signals.

\begin{teo}
    \label{thm:cwtmaintheorem}
    Let $\Omega > 0$, $a > 1$ and let $\psi\in L^1(\mathbb{R})$ be as in equation
    \eqref{eq:cauchywavelet} with $\rho \in L^\infty(\R)$. Then, the following are equivalent for $f,g \in \mathrm{PW}_\Omega$:
    \begin{enumerate}
        \item[i)] For all $k \in \mathbb{Z}$,
        \[
            \left\lvert \cW_\psi f\left(\frac{k}{4\Omega},1\right) \right\rvert
                = \left\lvert \cW_\psi g\left(\frac{k}{4\Omega},1\right) \right\rvert
            \mbox{ and }
            \left\lvert \cW_\psi f\left(\frac{k}{4\Omega},a\right) \right\rvert
                = \left\lvert \cW_\psi g\left(\frac{k}{4\Omega},a\right) \right\rvert.
        \]
        \item[ii)] $f_+ = \mathrm{e}^{\mathrm{i} \alpha} g_+$, for some $\alpha \in \R$.
    \end{enumerate}
\end{teo}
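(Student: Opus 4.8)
The plan is to prove the equivalence by establishing each implication separately, leaning on the two-scale continuous uniqueness result of Mallat--Waldspurger (Theorem~\ref{thm:mallatsemidiscretesampling}) and on the sampling-theoretic machinery developed above.

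The implication $ii)\Rightarrow i)$ is the elementary one. Since the Cauchy wavelet is progressive, its Fourier transform is supported in $\R_+$, so by the relation $(\psi_a^\#)^\wedge(\xi)=\overline{\widehat{\psi}(a\xi)}$ together with the convolution theorem one checks on the frequency side that $\cW_\psi f(\cdot,a)=\tfrac12\,\cW_\psi f_+(\cdot,a)$ for every $a\in\R_+$; in other words the Cauchy wavelet transform depends on $f$ only through its analytic representation $f_+$. Hence if $f_+=\mathrm{e}^{\mathrm{i}\alpha}g_+$, then by linearity $\cW_\psi f(\cdot,a)=\mathrm{e}^{\mathrm{i}\alpha}\cW_\psi g(\cdot,a)$ for all $a$, and taking moduli at the sample points $k/(4\Omega)$ for the scales $1$ and $a$ yields $i)$.

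For the converse $i)\Rightarrow ii)$, the first step is to upgrade the sampled equalities to equalities of the full magnitude profiles in the translation variable $b$. By Corollary~\ref{cor:objectisinpaleywienerspace}, both $\lvert\cW_\psi f(\cdot,a)\rvert^2$ and $\lvert\cW_\psi g(\cdot,a)\rvert^2$ lie in $\mathrm{PW}_{2\Omega}$ (and likewise at scale $1$). The sample spacing $1/(4\Omega)$ is precisely the Nyquist spacing for $\mathrm{PW}_{2\Omega}$, so I would apply the WSK sampling theorem (Theorem~\ref{thm:wsk}) to the difference $\lvert\cW_\psi f(\cdot,a)\rvert^2-\lvert\cW_\psi g(\cdot,a)\rvert^2\in\mathrm{PW}_{2\Omega}$: this difference vanishes at every node $k/(4\Omega)$ by $i)$, and therefore vanishes identically. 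Consequently $\lvert\cW_\psi f(b,a)\rvert=\lvert\cW_\psi g(b,a)\rvert$ for every $b\in\R$, and the same argument at scale $1$ gives $\lvert\cW_\psi f(b,1)\rvert=\lvert\cW_\psi g(b,1)\rvert$ for every $b\in\R$.

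The final step is to feed these two continuous-in-$b$ equalities into Theorem~\ref{thm:mallatsemidiscretesampling}. Writing the two scales as $1=a^0$ and $a=a^1$ with $a>1$, the exponents $j=0$ and $k=1$ are distinct, so the hypotheses of that theorem are satisfied and we conclude $f_+=\mathrm{e}^{\mathrm{i}\alpha}g_+$ for some $\alpha\in\R$, which is exactly $ii)$. I expect no genuine obstacle here: the real content of the argument is borrowed from Theorem~\ref{thm:mallatsemidiscretesampling}, and the only points requiring care are the bookkeeping ensuring that the sample density $1/(4\Omega)$ matches the bandwidth $2\Omega$ of $\lvert\cW_\psi f(\cdot,a)\rvert^2$, and the verification that the Cauchy wavelet transform factors through $f_+$ for the easy direction.
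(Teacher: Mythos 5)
Your proof is correct and takes essentially the same route as the paper's: both use Corollary~\ref{cor:objectisinpaleywienerspace} to place the squared magnitudes in $\mathrm{PW}_{2\Omega}$, upgrade the sampled equalities at spacing $1/(4\Omega)$ to full equalities in $b$ via the WSK sampling theorem (Theorem~\ref{thm:wsk}), and then invoke Theorem~\ref{thm:mallatsemidiscretesampling} at the two distinct scales $a^0 = 1$ and $a^1 = a$. The only immaterial differences are that you apply WSK to the difference of the squared magnitudes rather than to each function separately, and that you spell out the direction $ii) \Rightarrow i)$ (via the factorization of the Cauchy wavelet transform through $f_+$), which the paper dismisses as obvious.
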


\begin{proof}
    It is obvious that $ii)$ implies $i)$. Now, suppose that $i)$ holds. By assumption, we have that $\psi \in L^1(\R)$. Therefore, we may apply Corollary \ref{cor:objectisinpaleywienerspace} to see that $\lvert \cW_\psi f(\cdot,a^j) \rvert^2$ as well as $\lvert \cW_\psi g(\cdot,a^j) \rvert^2$ are in $\mathrm{PW}_{2\Omega}$, for $j \in \{0,1\}$. Hence, it follows from $i)$ along with the WSK sampling theorem that
    \[
        \left\lvert \cW_\psi f\left(\cdot,1\right) \right\rvert
            = \left\lvert \cW_\psi g\left(\cdot,1\right) \right\rvert
        \qquad \mbox{and} \qquad
        \left\lvert \cW_\psi f\left(\cdot,a\right) \right\rvert
            = \left\lvert \cW_\psi g\left(\cdot,a\right) \right\rvert.
    \]
    Finally, Theorem \ref{thm:mallatsemidiscretesampling} implies that the analytic representations $f_+$ and $g_+$ of $f$ and $g$ satisfy $f_+ = \mathrm{e}^{\mathrm{i} \alpha} g_+$, for some $\alpha \in \R$.
\end{proof}

\begin{oss}
    Note that the sampling set $\{ (k/4\Omega,a^j) \,\mid\, j=0,1,~k \in \mathbb{Z} \}$ in Theorem \ref{thm:cwtmaintheorem} can be replaced by a multitude of different sampling sets:
    
    In scale, we might sample at any two elements of $a^\mathbb{Z}$ as is evident from Theorem \ref{thm:mallatsemidiscretesampling}. In addition, one might show that Theorem
    \ref{thm:mallatsemidiscretesampling} continues to hold for $a^j$ replaced by $a_0$ and $a^k$ replaced by $a_1$, for all $0 < a_0 < a_1$, provided that $\rho(\xi) = \rho(a_0 \xi) = \rho(a_1 \xi)$, for a.e.~$\xi \in \R$.

    In time, we can replace the uniform sampling by any sampling sequence for $\mathrm{PW}_{2\Omega}$, see \cite{bruna2001sampling,seip2004interpolation}.
\end{oss}



\paragraph{Acknowledgements}\quad The authors would like to acknowledge funding through SNF Grant 200021 184698. 

\bibliographystyle{abbrv}
\bibliography{bibliowavelet}

\end{document}